\newtheorem{theorem}{Theorem}
\newtheorem{lemma}{Lemma}
\newtheorem{definition}{Definition}
\newtheorem{corollary}{Corollary}
\newtheorem{proposition}{Proposition}
\theoremstyle{remark}
\newtheorem{remark}{Remark}
\newcommand{\F}[1]{\mathbb{F}_{#1}}
\newcommand{\FF}{\mathbb{F}}
\newcommand{\GL}{\mathrm{GL}}
\newcommand{\SL}{\mathrm{SL}}
\newcommand{\PGL}{\mathrm{PGL}}
\newcommand{\PSL}{\mathrm{PSL}}
\renewcommand{\P}[1]{\mathbb{P}^{#1}(\mathbb{F}_q)}
\newcommand{\f}[1]{\mathbb{F}_{#1}}
\newcommand{\Suz}{\mathrm{Sz}}
\newcommand{\1}{\mathbf{1}}
\DeclareMathOperator{\Tr}{\mathrm{Tr}}
\title{On the Erd\H{o}s-Ko-Rado property for finite Groups}
\author{Mohammad Bardestani}
\address{Mohammad Bardestani, Department of Mathematics and Statistics, University of Ottawa, 585 King Edward, Ottawa, ON K1N
6N5, Canada.}
\email{mbardest@uottawa.ca}
\author{Keivan Mallahi-Karai}
\address{Keivan Mallahi-Karai, Jacobs University Bremen, Campus Ring I, 28759 Bremen, Germany.}
\email{k.mallahikarai@jacobs-university.de }
\keywords{Classification of finite minimal simple groups, Erd\H{o}s-Ko-Rado theorem, Nilpotent groups, Solvable groups, Special linear groups.}
\subjclass[2010]{Primary 05D05 ;  Secondary 20D05, 20D10, 20D15, 20G40}
\begin{document}
\begin{abstract}
Let a finite group $G$ act transitively on a finite set $X$. A subset $S\subseteq G$ is said to be {\it intersecting} if for any $s_1,s_2\in S$, the element $s_1^{-1}s_2$ has a fixed point. The action 
is said to have the {\it weak Erd\H{o}s-Ko-Rado} property, if the cardinality of any intersecting set is at most $|G|/|X|$. If, moreover, any maximum intersecting set is a coset of a point stabilizer, the action is said to have the {\it strong Erd\H{o}s-Ko-Rado} property. In this paper we will investigate the weak and strong Erd\H{o}s-Ko-Rado property and attempt to classify groups in which all transitive actions have these properties. In particular, we show that a group with
the weak Erd\H{o}s-Ko-Rado property is solvable and that a nilpotent group with the strong Erd\H{o}s-Ko-Rado property is the direct product of a $2$-group and an abelian group of odd order. 
\end{abstract}
\maketitle
\section{Introduction}
A family $\mathcal{A}$ of
$k$-subsets of an $n$-element set is called intersecting if, any two sets in $\mathcal{A}$ have a non-empty intersection. A classical theorem due to Erd\H os, Ko, and Rado~\cite{EKR} asserts that if $n>2k$ the cardinality of an intersecting set is at most ${n-1 \choose k-1}$. Moreover, the only sets for which the equality holds are those consisting of all $k$-subsets containing a fixed element.

Since its appearance, this theorem has been generalized in various ways. Let us recall some of those that are most relevant to the results in this paper. Let $S_n$ denote the symmetric group on the set $X=\{1,2,\dots,n\}$. A subset $S \subseteq S_n$ is called intersecting if, for any two permutations $\sigma, \tau \in S$, there exists $x\in X$ such that $\sigma(x)=\tau(x)$. Motivated by the Erd\H os-Ko-Rado theorem, 
Deza and Frankl~\cite{dezafrankl} proved that if $S \subseteq S_{n}$ is intersecting, then $|S| \leq (n-1)!$. 
In the same paper, they also conjectured that the only intersecting sets of size $(n-1)!$ are the ``canonical'' ones, namely those consisting of all the permutations $ \sigma$ with $ \sigma(i)=j$, for some fixed
$1 \le i,j \le n$. Observe that these are 
exactly the cosets of point stabilizers for this action. 
This conjecture was confirmed by Cameron and Ku~\cite{CamKu}, and independently by Larose and Malvenuto~\cite{Lar}. Later Godsil and Meagher~\cite{God} used the representation theory of symmetric groups to give an alternative proof. In a breakthrough paper,~\cite{Ellis}, Ellis, Friedgut and Pilpel proved an analogous result for $t$-intersecting sets. A set of permutations $S\subseteq S_n$ is called  $t$-intersecting if any two permutations in it agree on at least $t$ points. Their proof also relies on the representation theory of the symmetric groups.

Just as for the standard action of a permutation group on its underlying set, one can define the notion of
intersecting set for arbitrary group actions, and study the structure of maximum intersecting 
sets in this more general context. To be more precise, let $G$ be an arbitrary finite group, acting transitively on a finite set $X$. For $g \in G$ and $ x \in X$, the action of $g$ on $x$ will be denoted by $ g \cdot x$. Let us call a subset $S \subseteq G$ intersecting, if for every $g, h \in S$, there exists a point $x \in X$ such that $g \cdot x= h \cdot x$. This definition immediately shows that a coset of a point stabilizer is  intersecting. In the sequel, we will sometimes, informally, refer to these as the ``canonical'' intersecting sets. 
We say that an action satisfies the weak Erd\H os-Ko-Rado (or shortly, weak EKR) property if the cardinality of any intersecting set is bounded above by the cardinality of the stabilizers. 
If, moreover, the only intersecting sets with the maximum cardinality are the stabilizer cosets (i.e. the canonical ones), then we say that the action has the strong Erd\H os-Ko-Rado (or shortly, strong EKR) property. Finally, we will say that
a group $G$ has the weak (strong, respectively) EKR property, if all transitive actions of $G$ have the weak EKR (strong, respectively) property. 

The aim of this paper is to investigate the prevalence of the weak and strong EKR properties in the category of finite groups and finite group actions. Our first result shows that, perhaps surprisingly, the weak EKR property holds for a large class of finite groups. More precisely, we have the following theorem.

\begin{theorem}\label{wekr}
Let $G$ be a finite group which is either nilpotent or a subgroup of a direct product of groups of square-free order. Then $G$ has the weak EKR property.
\end{theorem}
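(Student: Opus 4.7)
Let $H = \mathrm{Stab}(x_0)$, so $|H| = |G|/|X|$. The intersecting condition on $S \subseteq G$ amounts to $S^{-1}S \subseteq D$, where $D := \bigcup_g gHg^{-1}$, and the target is $|S| \le |H|$. In both settings I would run a single inductive argument on $|G|$ driven by a minimal normal subgroup $N \triangleleft G$ of prime order. Primality forces $N \cap H \in \{1, N\}$. If $N \le H$, then passing to $G/N$ the image $\bar S$ is intersecting for the $G/N$-action on $G/H$ (with stabilizer $H/N$), the fibres of $S \twoheadrightarrow \bar S$ have size at most $|N|$, and induction gives $|S| \le |N| \cdot |H|/|N| = |H|$. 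If $N \cap H = 1$, then $N \cap gHg^{-1} = g(N \cap H)g^{-1} = 1$ by normality of $N$, so $N \cap D = \{1\}$; this forces $S$ to inject into $G/N$, and its image is intersecting for the $G/N$-action on $G/(NH)$ (stabilizer of order $|NH|/|N| = |H|$), closing the case by induction.

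For the nilpotent case, I would first reduce to $p$-groups via the Sylow decomposition $G = \prod_p G_p$. In any nilpotent group $H = \prod_p (H \cap G_p)$, so $G \curvearrowright G/H$ decomposes as a product of the actions $G_p \curvearrowright G_p/(H \cap G_p)$, and correspondingly $D = \prod_p D_p$. Each projection $\pi_p(S)$ is then intersecting for its own $G_p$-action, whence $|S| \le \prod_p |\pi_p(S)| \le \prod_p |H \cap G_p| = |H|$, reducing the claim to the case of $p$-groups. In a $p$-group, every nontrivial normal subgroup meets $Z(G)$ nontrivially, so any minimal normal $N$ lies in $Z(G)$; since every subgroup of $Z(G)$ is normal in $G$, minimality of $N$ forces $|N| = p$, exactly the prime-order input required by the uniform step above.

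For the second class, write $G \le \tilde G := \prod_i G_i$ with each $|G_i|$ square-free. Any $S \subseteq G$ intersecting for $G \curvearrowright G/H$ remains intersecting for the larger action $\tilde G \curvearrowright \tilde G/H$ (the stabilizer of $H$ in $\tilde G$ is still $H$), so it suffices to prove the weak EKR property for $\tilde G$. A minimal normal subgroup $N \triangleleft \tilde G$ projects into each factor $G_i$ either trivially or injectively, so $N$ embeds into some $G_i$ as one of its minimal normal subgroups; being elementary abelian of order dividing the square-free $|G_i|$, this order must be prime. The uniform inductive step then applies. \emph{The main obstacle} is precisely securing the primality of $|N|$: otherwise $N \cap H$ could be a proper non-trivial subgroup of $N$ which is not normal in $G$, and the induction would have no obvious route. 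A further technical point is to ensure that $\tilde G/N$ remains in a class to which the inductive hypothesis applies; this is handled by the product structure, which guarantees that when $N$ is supported on a single factor (the generic case), the quotient $\tilde G/N$ is again a direct product of groups of square-free order.
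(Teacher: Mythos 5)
Your nilpotent half is correct and is, at bottom, the paper's own argument: the paper also inducts on $|G|$, but works with $A=H\cap Z(G)$ directly, quotienting by $A$ when $A\neq\{\1\}$ (this is part (3) of Lemma \ref{inducedEKR}, your case $N\le H$) and, when $A=\{\1\}$, projecting to $G/Z(G)$ and observing that $S^{-1}S\cap Z(G)\subseteq\bigcup_g (H\cap Z(G))^g=\{\1\}$ forces the projection to be injective on $S$ (your case $N\cap H=\{\1\}$, with $Z(G)$ in place of your prime-order central $N$). Your preliminary Sylow reduction is harmless but unnecessary; the mechanism is the same, and your ``uniform step'' for $N\cap H=\{\1\}$ indeed needs only normality of $N$, which is fine.

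The square-free half is where you genuinely diverge from the paper, and it is where the gap lies. The paper quotes Hall's theorem (Theorem \ref{Hall}) that these groups are exactly the complemented groups, takes a complement $K$ of $H$, checks that $K$ is an independent set of size $|X|$ in $\Gamma_{G,X}$, and applies the clique--independent-set bound (Lemma \ref{cliq}, Corollary \ref{comp}). You instead run the minimal-normal-subgroup induction on $\tilde G=\prod_i G_i$. Two problems: first, the claim that a minimal normal subgroup of $\tilde G$ is elementary abelian of prime order is not free; you need that minimal normal subgroups are characteristically simple and that a non-abelian simple group cannot have square-free order (its order is divisible by $4$), or else the solvability of groups of square-free order. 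Second, and more seriously, your induction only treats the case where $N$ is supported on a single factor, and calling this ``the generic case'' does not dispose of the other one: a minimal normal subgroup of a direct product need not lie in a factor (the diagonal of $\mathbb{Z}/p\mathbb{Z}\times\mathbb{Z}/p\mathbb{Z}$ already projects injectively onto both factors), and for such $N$ you have not shown that $\tilde G/N$ is again a direct product of groups of square-free order, so your inductive hypothesis does not apply to it. The repair is easy: do not take an arbitrary minimal normal subgroup of $\tilde G$; take a minimal normal subgroup $N$ of a single nontrivial factor $G_i$. It is normal in $\tilde G$ because the other factors centralize $G_i$, it has prime order by the characteristic-simplicity argument, and $\tilde G/N\cong G_1\times\cdots\times(G_i/N)\times\cdots\times G_k$ visibly stays in the class. (Alternatively, Hall's theorem shows the class is quotient-closed, but once you invoke Hall you may as well use the paper's shorter complement argument.) With that modification your route works and is a genuinely more elementary alternative to Hall's theorem for this half.
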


It is not difficult to construct solvable groups without the weak EKR property. However, we do not yet have a complete characterization of the groups with the weak EKR property. In fact, it is not clear if a neat
classification is at all possible. For now, using the classification of minimal finite simple groups, we can prove the following partial converse to Theorem \ref{wekr}:

\begin{theorem}\label{Solvable}
Let $G$ be a finite group with the weak EKR property. Then $G$ is solvable. 
\end{theorem}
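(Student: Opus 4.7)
The plan is to argue by contraposition: assuming $G$ is non-solvable, I will produce a transitive $G$-action admitting an intersecting set strictly larger than the point stabilizer. The strategy rests on two ingredients. The first is that the weak EKR property is section-closed. Given a normal subgroup $N \triangleleft G$ and any transitive $(G/N)$-action on a set $X$, pulling back to $G$ multiplies both the stabilizer size and the cardinality of the preimage of any intersecting subset by $|N|$, so a violator downstairs lifts to a violator upstairs. For a subgroup $H \le G$ acting transitively on $X$ with intersecting set $S \subseteq H$ and $K = \mathrm{Stab}_H(x_0)$, I would consider the $G$-action on $G/K$ by left multiplication: for $s_1, s_2 \in S$, if $s_1^{-1}s_2$ fixes the point $h\cdot x_0 \in X$, it also fixes the coset $hK \in G/K$, so $S$ remains intersecting; since the $G$-stabilizer of the identity coset is $K$ itself, the violation persists. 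The second ingredient is Thompson's classification of minimal simple groups: every non-solvable finite group has a section isomorphic to one of $\mathrm{PSL}_2(p)$ (with $p \ge 5$ prime and $5 \nmid p^2-1$), $\mathrm{PSL}_2(2^p)$ ($p$ prime), $\mathrm{PSL}_2(3^p)$ ($p$ odd prime), $\mathrm{PSL}_3(3)$, or $\mathrm{Sz}(2^p)$ ($p$ odd prime).

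These two ingredients reduce the theorem to exhibiting, for each minimal simple group $T$ in Thompson's list, a single transitive $T$-action together with an intersecting subset strictly larger than the point stabilizer. A natural framework is to pick a maximal subgroup $H \le T$, consider the coset space $T/H$, and augment $H$ itself by a short list of additional cosets $Hg_1, \ldots, Hg_m$ with $g_i \notin H$, chosen so that the union $H \cup \bigcup_i Hg_i$ remains intersecting; then it has cardinality $(m+1)|H|$, strictly larger than $|H|$. The intersecting condition amounts to requiring that every product $s_1^{-1}s_2$ with $s_1, s_2$ in this union lies in a conjugate of $H$. For the $\mathrm{PSL}_2(q)$ families, the derangement criterion for the standard action on $\mathbb{P}^1(\mathbb{F}_q)$ has a transparent trace/eigenvalue description; for $\mathrm{PSL}_3(3)$ and the Suzuki groups, the subgroup and conjugacy structure of the stabilizer in the standard $2$-transitive action is explicit enough to support the construction directly.

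The main obstacle is carrying out this case analysis uniformly in $q$ for the two infinite families $\mathrm{PSL}_2(q)$ and $\mathrm{Sz}(q)$. For some natural candidate actions the canonical stabilizer may already be essentially maximal among intersecting sets, so the choice of $H$ must be made carefully; a Borel subgroup, a non-split torus normalizer, or a Frobenius complement are plausible candidates, and the right choice is forced by which cosets can actually be adjoined without destroying the intersecting property. Verifying the existence of the $g_i$ uniformly in $q$ is where the bulk of the work lies, drawing on the conjugacy class structure of $\mathrm{PSL}_2(q)$ and $\mathrm{Sz}(q)$, the characteristic polynomials of semisimple elements, and the Singer-like cyclic subgroups in the Suzuki case.
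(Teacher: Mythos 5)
Your reduction is sound and is essentially the paper's: closure of the weak EKR property under subgroups and quotients (the paper's Lemmas on induced actions and quotients), combined with Thompson's classification, reduces the theorem to showing that no minimal finite simple group has the weak EKR property. But that last step is the actual content of the proof, and your proposal does not carry it out: for each of $\PSL_2(\F{q})$, $\PSL_3(\f{3})$ and $\Suz_{2^p}$ you only describe a template (take a maximal subgroup $H$, adjoin extra cosets $Hg_i$ keeping the union intersecting) and explicitly defer the verification, acknowledging that ``the bulk of the work lies'' there. Until a concrete action and a concrete intersecting set exceeding the stabilizer are produced for every group on Thompson's list, the theorem is not proved.

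Moreover, the template you sketch points in a direction that cannot work as stated for the key family. For the standard action of $\PSL_2(\F{q})$ on $\P{1}$ (your suggested testing ground, with $H$ a Borel/point stabilizer), the weak EKR property is in fact known to hold --- this is the Meagher--Spiga result cited in the paper --- so no augmentation of the point stabilizer by extra cosets, or even by a single element, exists there; the same caution applies to other ``natural'' maximal subgroups. The paper's constructions are of a different nature: one takes very small subgroups and beats them by a single extra element or by a larger subgroup lying in the union of their conjugates. Concretely, for $\PSL_2(\F{q})$ one uses the order-$4$ (or order-$2$) subgroup generated by $J=\left(\begin{smallmatrix}0&-1\\1&0\end{smallmatrix}\right)$, or the diagonal torus, together with one additional trace-zero matrix $B$ found by solving $c^2+d^2=-1$ (Lemma on sums of two squares), the point being that all trace-zero elements of $\SL_2(\F{q})$ are $\SL_2$-conjugate into the chosen subgroup; for $\PSL_3(\f{3})$ one uses two unipotent subgroups $U\subsetneq V$ with $V$ contained in the union of $\SL_3$-conjugates of $U$ (a rational/Jordan canonical form argument); and for $\Suz_{2^p}$ one uses the subgroups $B_0\le B$ of the Sylow $2$-subgroup with $B=\bigcup_{h\in H}B_0^h$, where $H$ is the torus $\{k(\gamma)\}$. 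None of this appears, even in outline, in your proposal, and also note that adjoining a whole coset is an unnecessarily strong requirement: to violate the weak EKR property for the action on $G/H$ it suffices to adjoin a single element to the intersecting set $H$, which is exactly what the paper does.
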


Let us now turn to the strong EKR property. Let $G$ be an arbitrary finite group and
$N$ a normal subgroup of $G$. It is easy to verify (Proposition \ref{normal}) that the action of $G$ on $G/N$ has the strong EKR property. This, in particular, shows that any Hamiltonian group (i.e., a non-abelian group whose all subgroups
are normal) has the strong EKR property. It is a fact that a finite Hamiltonian group is the direct product of the eight-element quaternion group, an elementary abelian $2$-group, and an abelian group of odd order. In particular, such groups are always nilpotent and for any odd prime $p$, their $p$-Sylow subgroup are abelian. Our next theorem offers a partial converse.
\begin{theorem}\label{classification_S}
Let $G$ be a non-abelian nilpotent group with the strong EKR property.
Then $G$ is the direct product of a $2$-group and an abelian group of odd order. 
\end{theorem}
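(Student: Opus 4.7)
The plan is to reduce the problem to individual Sylow subgroups and then to produce, inside any non-abelian $p$-group of odd order, an explicit non-canonical maximum intersecting set. Since $G$ is nilpotent, $G = P_2 \times P_3 \times \cdots$ decomposes as a direct product of its Sylow subgroups, and I would first verify that the strong EKR property is inherited by direct factors: given a transitive $H$-action on $X$, extending it to $G = H \times K$ via the projection $G \to H$ sends an intersecting set $S \subseteq H$ to the intersecting set $S \times K \subseteq G$ of maximum size $|\Stab_H(x)| \cdot |K| = |\Stab_G(x)|$, and the canonical intersecting sets of the $G$-action are precisely $(h_0\Stab_H(x_0)) \times K$ with $h_0\Stab_H(x_0)$ canonical in $H$. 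Thus it suffices to show that every non-abelian $p$-group with $p$ odd fails the strong EKR property.

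For such a $P$, I would select a minimal non-abelian subgroup $Q \leq P$. By the classical Miller--Moreno classification, when $p$ is odd, $Q$ has nilpotency class $2$ and is isomorphic to either
\[
\langle a, b \mid a^{p^m} = b^{p^n} = 1,\ bab^{-1} = a^{1 + p^{m-1}}\rangle, \quad m \geq 2,\ n \geq 1,
\]
or
\[
\langle a, b, c \mid a^{p^m} = b^{p^n} = c^p = 1,\ [a, b] = c,\ c\text{ central}\rangle, \quad m, n \geq 1.
\]
In either case, $Q$ admits a cyclic subgroup $K = \langle x\rangle$, a central element $z \in Z(Q)$ of order $p$ with $z \notin K$, and an element $y \in Q$ satisfying $y x y^{-1} = z x$: in the first family take $K = \langle b\rangle$, $y = a$, $z = a^{-p^{m-1}}$, and in the second take $K = \langle a\rangle$, $y = b$, $z = c^{-1}$. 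In particular, $K$ is non-normal in $Q$, and hence also in $P$.

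For the action of $P$ on $P/K$, I propose the set
\[
S \;=\; \{\,x^j : j \in J\,\} \;\cup\; \{\,x^j z : j \in J'\,\},
\]
where $J \sqcup J' = \{0, 1, \ldots, |K|-1\}$ is taken to be the disjoint union of full preimages of two nonempty proper subsets $\bar J, \bar J' \subseteq \F{p}$ partitioning $\F{p}$ (possible since $p \geq 3$). Using the centrality of $z$ and the identity $y^k x y^{-k} = z^k x$, one checks that every pairwise difference $s_1^{-1}s_2$ with $s_1, s_2 \in S$ lies in a conjugate $\langle z^k x\rangle$ of $K$; hence $S$ is intersecting and, as $|S| = |K|$, of maximum size by Theorem~\ref{wekr}. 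Non-canonicity follows from a simple closure obstruction: since $1 \in S$, any canonical representation $S = b_0 K a_0^{-1}$ would collapse to a conjugate subgroup $a_0 K a_0^{-1}$ of $P$, but $S$ is not closed under multiplication because for any $j_1, j_2 \in J'$ the product $(x^{j_1}z)(x^{j_2}z) = x^{j_1+j_2}z^2$ lies outside $S$ (using $p$ odd so that $z^2 \notin \{1, z\}$). The main obstacle is to verify that the intersecting and non-canonical properties of $S$ inside $Q$ propagate to the action of the full ambient group $P$; this rests on the observations that $Q$-conjugates of $K$ are among the $P$-conjugates of $K$, and that set-theoretic non-closure of $S \subseteq Q$ is intrinsic to $S$ and hence persists when $S$ is viewed as a subset of $P$.
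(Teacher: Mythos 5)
Your proposal is correct, but it follows a genuinely different internal route from the paper's. The paper argues by minimal counter-example: using Lemma~\ref{s-red} it reduces to a non-abelian $p$-group ($p$ odd) of class two with $|[G,G]|=p$ that contains an element $x$ of order $p$ outside the center (this last step uses the fact, quoted from Robinson, that an odd non-abelian $p$-group has at least two subgroups of order $p$); it then takes $H=\langle x\rangle$ of order exactly $p$, shows via Lemma~\ref{two-step} that $\{x,\dots,x^{p-1}\}\cup\{t^{-1}\}$ (with $t=[x,y]$ central of order $p$) is intersecting for the action on $G/H$, and rules out canonicity with the coset-rigidity Lemma~\ref{mod-int}. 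You instead keep the ambient $p$-group $P$ intact, descend to a minimal non-abelian subgroup $Q$ and invoke the R\'edei/Miller--Moreno classification to extract the data $yxy^{-1}=zx$ with $z$ central of order $p$ and $z\notin K=\langle x\rangle$; your $K$ may have order $p^{s}$ with $s>1$, your violating set is the mixed set built from a partition of $\F{p}$ into full mod-$p$ fibres (exactly what guarantees that the cross-differences $x^{d}z^{\pm1}$ have $p\nmid d$, hence are conjugated into $K$ by a power of $y$), and non-canonicity comes from ``contains $\1$ but is not closed under multiplication'' instead of Lemma~\ref{mod-int}. Both routes correctly avoid assuming that the strong EKR property passes to subgroups (which is not available): the paper quotients rather than passing to subgroups, while you check intersection against $P$-conjugates of $K$ and observe that failure of closure is intrinsic to $S$. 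The paper's route buys self-containedness (only elementary lemmas proved in the paper plus one standard fact are used); your route buys a shorter reduction and a more flexible construction with $|K|$ an arbitrary $p$-power, at the price of citing the classification of minimal non-abelian $p$-groups. Two small points to tidy: choose $0\in\bar J$ so that $\1\in S$, as your closure argument assumes; and the reason $(x^{j_1}z)(x^{j_2}z)=x^{j_1+j_2}z^{2}\notin S$ is that $z^{2}\notin K$ and $z^{2}\notin Kz$, both immediate from $\langle z\rangle\cap K=\{\1\}$ and $p$ odd, rather than merely $z^{2}\notin\{\1,z\}$.
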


\begin{remark}
It is certainly desirable to have a complete characterization of nilpotent groups with the strong EKR property. Let us remark that such a group does not have to be Hamiltonian. In fact, it is easy to see that the dihedral group of order $8$ has the strong EKR property without being Hamiltonian. 
\end{remark}

Theorems \ref{Solvable} and \ref{classification_S} somewhat suggest that the strong and even the weak EKR property for groups are too restrictive and, thus advocate the study these properties for concrete group actions instead. A rich source of geometric actions is provided by the finite simple groups of Lie type (e.g., $\PSL_{n}$, ${\mathrm{Sp}}_{2n}$, etc. over finite fields) on flag varieties (e.g., projective spaces $\P{n}$ over finite fields). Other finite simple groups are also often realized as permutation groups. 

Indeed, much of the recent work related to the EKR property has focused on such actions. In ~\cite{Me}, Meagher and Spiga considered the action of the projective general
linear group $\PGL_{2}(\F{q})$ on the projective line $\P{1}$ and used character theory to establish the weak and strong EKR property for this action. They proved the weak EKR property of the action of $\PSL_2(\F{q})$ on $\P{1}$, and conjectured that the action has also the strong EKR property. 

More generally, one can consider the action of $\PGL_n(\F{q})$ and $\PSL_n(\F{q})$ on $\P{n-1}$ and study the weak and strong EKR property. As mentioned above, these actions have been considered in ~\cite{Me} in which the authors use character theory to establish the strong EKR property for the action of $\PGL_2(\F{q})$ on $\P{1}$.
Here, using an elementary method, we prove the weak EKR property for a larger class of actions, which proves the easy part of Meagher and Spiga's conjecture.   
  
\begin{theorem}\label{mainPSL} The standard action of $\PGL_n(\F{q})$ on $\P{n-1}$ has the weak EKR property.
Moreover, when $\gcd(n,q-1)=1$, the action of $\PSL_n(\F{q})$ on $\P{n-1}$ has the weak EKR property.
\end{theorem}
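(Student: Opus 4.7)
My plan is to exhibit a Singer cyclic subgroup $C \le \PGL_n(\F{q})$ of order $|\P{n-1}| = (q^n-1)/(q-1)$ acting regularly on $\P{n-1}$, and then to observe, by a coset argument, that any intersecting set meets each coset of $C$ in at most one point. To construct $C$, I would identify $\F{q^n}$ with an $n$-dimensional $\F{q}$-vector space; multiplication by elements of $\F{q^n}^{*}$ yields an embedding $\F{q^n}^{*} \hookrightarrow \GL_n(\F{q})$, whose composition with $\GL_n(\F{q}) \twoheadrightarrow \PGL_n(\F{q})$ has kernel $\F{q}^{*}$, so the image $C$ is cyclic of order $(q^n-1)/(q-1)$. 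To verify that every non-identity $\bar{\alpha} \in C$ acts without fixed points on $\P{n-1}$, note that a fixed line $[v]$ would yield $\alpha v = \lambda v$ for some $\lambda \in \F{q}$; since $\F{q^n}$ is a field and $v \neq 0$, this forces $\alpha = \lambda \in \F{q}^{*}$, contradicting $\bar{\alpha} \neq 1$. Because $|C| = |\P{n-1}|$, this free action is automatically regular.

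Now let $S \subseteq \PGL_n(\F{q})$ be an intersecting set and decompose $\PGL_n(\F{q}) = \bigsqcup_{i} C g_{i}$ into right cosets of $C$. I claim $|S \cap C g_{i}| \le 1$ for every $i$. Suppose otherwise, with distinct $cg, c'g \in S$. Then
\[
(cg)^{-1}(c'g) \;=\; g^{-1}(c^{-1}c')g
\]
is conjugate to the non-identity element $c^{-1}c' \in C$. For any group action, conjugation preserves fixed-point-freeness, since $g^{-1}hg$ fixes $y$ precisely when $h$ fixes $g \cdot y$; hence $(cg)^{-1}(c'g)$ has no fixed point on $\P{n-1}$, contradicting the intersecting hypothesis. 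Summing over cosets yields $|S| \le [\PGL_n(\F{q}):C] = |\PGL_n(\F{q})|/|\P{n-1}|$, which equals the order of a point stabilizer; this establishes the weak EKR property for the first action.

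For $\PSL_n(\F{q})$ under the hypothesis $\gcd(n,q-1)=1$, I would observe that the natural map $\PSL_n(\F{q}) \to \PGL_n(\F{q})$ has image of index $|\F{q}^{*}/(\F{q}^{*})^{n}| = \gcd(n,q-1) = 1$, so $\PSL_n(\F{q}) = \PGL_n(\F{q})$ and the second assertion reduces to the first. The only substantive step in the argument is the eigenvalue check confirming that the Singer generator is fixed-point-free on $\P{n-1}$, which rests on nothing more than $\F{q^n}$ being a field; the remainder is elementary coset book-keeping, in line with the authors' promise of an ``elementary method.''
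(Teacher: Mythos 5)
Your proof is correct. It is built around the same central object as the paper's argument---the Singer subgroup $C\le\PGL_n(\F{q})$ of order $m=(q^n-1)/(q-1)$ coming from multiplication by $\F{q^n}^*$---but the surrounding details differ in three ways, each a genuine simplification. First, you verify fixed-point-freeness of the nontrivial elements of $C$ directly: a projective fixed point gives $\alpha v=\lambda v$ with $v\neq 0$ in the field $\F{q^n}$, forcing $\alpha=\lambda\in\F{q}^*$; the paper instead quotes the fact that the eigenvalues of $\varphi_x$ are the Galois conjugates $x^{q^\ell}$ and excludes eigenvalues in $\F{q}$ by order computations with a generator $\zeta$ of $\F{q^n}^*$. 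Second, where the paper feeds its independent set into the clique--coclique bound for vertex-transitive graphs (Lemma \ref{cliq}), you exploit the fact that your independent set is a subgroup: the right cosets of $C$ partition $\PGL_n(\F{q})$, and since conjugation preserves fixed-point-freeness, each coset meets an intersecting set at most once, giving $|S|\le[\PGL_n(\F{q}):C]=|\PGL_n(\F{q})|/|\P{n-1}|$ with no graph-theoretic input (in effect you reprove Lemma \ref{cliq} in the special case where the coclique is a subgroup, which is all that is needed here). Third, for the second assertion you note that $[\PGL_n(\F{q}):\PSL_n(\F{q})]=|\F{q}^*/(\F{q}^*)^n|=\gcd(n,q-1)=1$, so the two groups, and hence the two actions on $\P{n-1}$, coincide and the $\PSL_n$ statement is literally the $\PGL_n$ statement; the paper instead builds a second independent subgroup inside $\PSL_n(\F{q})$ (the image of the order-$m$ subgroup generated by $\zeta^{q-1}$) and redoes the eigenvalue analysis, which is more work, though that construction is the one that would generalize to situations where $\PSL_n$ is a proper subgroup of $\PGL_n$ and your identification is unavailable. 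All three of your shortcuts are sound, and the hypothesis $\gcd(n,q-1)=1$ is used exactly where it must be.
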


It is easy to see that the stabilizer of a hyperplane is also an intersecting set for the action of $\PGL_n(\F{q})$ on $\P{n-1}$, showing that this action does not have the strong EKR property in the above-defined sense. Nevertheless, it may still have a chance to satisfy a weaker variation of it. In ~\cite{Me}, Meagher and Spiga conjecture that the only intersecting sets of the maximum size are the cosets of point and hyperplane
stabilizers. This conjecture has been verified for the case $n=3$ by the same authors in ~\cite{Me2}.
Similarly, using elementary arguments we can prove the following result for a similar action:

\begin{theorem}\label{Unipotent}
Let $G=\GL_2(\f{q})$ or $G=\SL_2(\f{q})$ and $U$ be the subgroup consisting of upper-triangular unipotent matrices.  Then the action of $G$ on $G/U$ has the strong EKR property.
\end{theorem}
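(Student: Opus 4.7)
The plan is to translate the intersecting condition into an orthogonality condition in the Lie algebra $\mathfrak{sl}_2(\f{q})$ equipped with the trace form, and then exploit the Witt-index-one geometry of the associated quadratic form. Replacing $S$ by $s_0^{-1}S$ for some $s_0\in S$, we may assume $1\in S$. Then every $s\in S$ fixes some point of $G/U$, i.e.\ is $G$-conjugate to an element of $U$, and for $2\times 2$ matrices this is equivalent to $s$ being unipotent: $\Tr(s)=2$ and $\det(s)=1$. In particular $S\subseteq \SL_2(\f{q})$ in either case. Writing $s=I+A_s$, unipotency gives $A_s^2=0$ and $\Tr(A_s)=0$, so each $A_s$ lies in the nilpotent cone $\mathcal{N}\subset \mathfrak{sl}_2(\f{q})$, and $s_1^{-1}=I-A_{s_1}$. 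A short computation gives
\[
\Tr(s_1^{-1}s_2) \;=\; 2 - \Tr(A_{s_1}A_{s_2}),
\]
so the intersecting condition becomes $\Tr(A_{s_1}A_{s_2})=0$ for all $s_1,s_2\in S$; equivalently, $T:=\{A_s:s\in S\}$ is a subset of $\mathcal{N}$ whose elements are pairwise orthogonal under the trace form.

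Next I would observe that $Q(A):=-\det(A)$ is a quadratic form on the $3$-dimensional space $\mathfrak{sl}_2(\f{q})$ whose null cone is exactly $\mathcal{N}$ and whose polar bilinear form is $(A,B)\mapsto \Tr(AB)$. The crucial geometric input is that $Q$ has Witt index $1$: if $v_1,v_2\in \mathcal{N}$ were linearly independent with $\Tr(v_1v_2)=0$, the identity
\[
Q(\lambda v_1+\mu v_2) \;=\; \lambda^2 Q(v_1) + \mu^2 Q(v_2) + \lambda\mu \Tr(v_1v_2) \;=\; 0 \qquad (\lambda,\mu\in \f{q})
\]
would make $\mathrm{span}(v_1,v_2)$ a $2$-dimensional totally isotropic subspace, which a direct inspection of $Q(a,b,c)=a^2+bc$ on $\f{q}^3$ rules out in every characteristic. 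Hence two distinct isotropic lines of $\mathfrak{sl}_2(\f{q})$ are never mutually orthogonal, and $T$ must lie inside a single isotropic line. This yields $|S|=|T|\le q$, with equality forcing $T=\f{q}\cdot A_0$ for some nonzero $A_0\in \mathcal{N}$.

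To identify such a maximum $S$ with a coset of a point stabilizer, I would use that $G$ acts transitively by conjugation on the $q+1$ isotropic lines of $\mathfrak{sl}_2(\f{q})$: the stabilizer of the standard line $\f{q}\cdot E$ with $E=\begin{pmatrix}0 & 1\\ 0 & 0\end{pmatrix}$ is the Borel subgroup $B$ of upper-triangular matrices, and $|G:B|=q+1$ matches the number of conjugates of $U$. The correspondence $gUg^{-1}\leftrightarrow g(U-I)g^{-1}=\f{q}\cdot gEg^{-1}$ then produces some $g\in G$ with $\f{q}\cdot A_0=g(U-I)g^{-1}$, so $S=I+T=gUg^{-1}$; undoing the initial left-translation, $S$ is a left coset of $gUg^{-1}=\Stab(gU)$, which is the strong EKR conclusion. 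The main technical point to settle is the Witt-index-one claim uniformly in characteristic (in characteristic $2$ the polar form has a one-dimensional radical, but $Q$ remains anisotropic on this radical and the direct inspection above still rules out $2$-dimensional totally isotropic subspaces); the remaining steps are routine.
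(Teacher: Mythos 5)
Your proof is correct, and it reaches the decisive fact by a different route from the paper's. Both arguments share the same skeleton: translate so that $\1\in S$, note that every element of $S^{-1}S$ is then unipotent, and show that this forces $S$ into a single conjugate of $U$, which is a point stabilizer of order $q$. The paper gets the forcing step from Lemma \ref{uni}, proved by bare-hands $2\times 2$ matrix computations: a nontrivial unipotent element lies in a unique unipotent subgroup (namely $C_G(g)\cap\mathcal{U}$), and a product of nontrivial elements from two distinct unipotent subgroups is never unipotent, by a direct trace--determinant calculation. You instead linearize, writing $s=I+A_s$ with $A_s$ nilpotent in $\mathfrak{sl}_2(\f{q})$, compute $\Tr(s_1^{-1}s_2)=2-\Tr(A_{s_1}A_{s_2})$, and deduce that the $A_s$ are pairwise orthogonal singular vectors of $Q=-\det$; since $a^2+bc$ admits no two-dimensional totally singular subspace (your characteristic-$2$ caveat is handled correctly by the radical argument you sketch), they are all proportional. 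This is essentially the same computation as the paper's, repackaged as Witt-index-one geometry; what your version buys is a conceptual explanation of the numerology (the bound $q$ is the size of an isotropic line, and the $q+1$ conjugates of $U$ are the $q+1$ singular lines of the conic), while the paper's version stays entirely inside elementary matrix algebra and needs no quadratic-form language. Two small points to tighten in a final write-up: the intersecting condition only gives you the implication that $s_1^{-1}s_2$ unipotent forces $\Tr(A_{s_1}A_{s_2})=0$ (which is all you use, so phrase it as an implication rather than an equivalence), and in the $\SL_2(\f{q})$ case you should record explicitly that the element $g$ carrying $\f{q}E$ to $\f{q}A_0$ can be chosen in $\SL_2(\f{q})$ --- for instance because $\SL_2(\f{q})$ is already transitive on the isotropic lines, via their kernels in $\mathbb{P}^1(\F{q})$ --- so that $gUg^{-1}$ is genuinely a point stabilizer for the $\SL_2$-action and not merely a $\GL_2$-conjugate of $U$.
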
 

This paper is organized as follows. In Section~\ref{BASE} we give the basic definitions and proof of some general results about the EKR properties. In Section~\ref{EX}, the failure of the weak and strong EKR property for some concrete groups is demonstrated. In Section~\ref{WK}, we prove Theorems~\ref{wekr} and ~\ref{Solvable}.
Section~\ref{ST} is entirely about the proof of Theorem~\ref{classification_S}. Finally, in Section~\ref{ACT}, we will discuss specific actions of the projective general and special linear groups and prove the weak and strong EKR property for these actions.

\section{Notations and preliminary results}\label{BASE}
We start will some notation that will be
used throughout this paper. 
Let $G$ be a finite group. If $\theta$ is an automorphism of $G$, we will write $g^{\theta}$ for $\theta(g)$. 
For the inner automorphism $\iota_h(g)= h^{-1}gh$, we will instead write $g^{h}:=h^{-1}gh$. In particular, for
$g,h,k \in G$, the equation $(g^{h})^{k}= g^{hk}$ holds. For a subset $A \subseteq G$, we will also denote $A^{h}= \{ a^{h}: a \in A \}$. For $A \subseteq G$, we also 
denote $A^{-1}= \{ a^{-1}: a \in A \}$. For $A, B \subseteq G$, denote $AB= \{ ab: a \in A, b \in B \}$. The identity element of groups is denoted by $\1$. The cardinality of a set $A$ is denoted by $|A|$. The finite field with $q$ elements will be denoted by $\F{q}$. Also by a $G$-set $X$, we mean a finite set $X$ with a transitively $G$-action. 

\begin{definition} Let $G$ be a finite group acting transitively on a finite set $X$. We say that the action has the weak EKR property if for any intersecting set $S \subseteq G$, we have $|S| \le |G|/|X|$. We say that the action
has the the strong EKR property if, in addition, the only intersecting sets of size $|G|/|X|$, called maximum intersecting sets, are the cosets of point stabilizers. The group $G$ is said to have the weak EKR (respectively, strong EKR) property if all of its transitive actions have the weak EKR (respectively, strong EKR) property. 
\end{definition}  
\begin{remark}
Note that for any subgroup $H$ of a group $G$, $G$ acts transitively on $G/H$ by translations: $g \cdot (xH)=
(gx)H$\footnote{Every time we refer to ``the action of $G$ on $G/H$'' in the future, we always have this action in
mind.}.
Conversely, any transitive action of a group $G$ on a set $X$ is isomorphic (in the category of $G$-sets) to the action  of $G$ on $G/H$ by translation, for a subgroup $H$ of $G$. The subgroup $H$, which is determined up to conjugacy, is the stabilizer of a point $x \in X$. We will freely use this correspondence throughout the paper.  
\end{remark}

\begin{remark}\label{sgr}
It is easy to see that if $S \subseteq G$ is an intersecting set for the $G$ action on $G/H$, then any translation 
$aS=\{ as: s \in S \}$ is also intersecting. In particular, this action has the strong EKR property if
it has the weak EKR property and the only intersecting sets containing the identity element are the conjugates
of $H$. 
\end{remark}

The proof of the following lemma is straightforward. 

\begin{lemma}\label{inter-1} For a given group $G$ and a subgroup $H \le G$, a subset $S\subseteq G$ is an intersecting set for the action of $G$ on $G/H$ if and only if
$$
S^{-1}S\subseteq \bigcup_{g\in G}H^g.
$$  
\end{lemma}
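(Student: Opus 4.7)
The plan is to unwind the definitions of the translation action and of the intersecting property, and observe that the characterization follows in one line. The key identification is the form of the point stabilizers for this action.

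First I would compute the stabilizer of an arbitrary point $xH \in G/H$. By definition, $g \in \Stab(xH)$ if and only if $gxH = xH$, which rearranges to $x^{-1}gx \in H$, or equivalently $g \in xHx^{-1} = H^{x^{-1}}$. Since $x$ ranges over all of $G$ when $xH$ ranges over $G/H$, and $x \mapsto x^{-1}$ is a bijection of $G$, the union of all point stabilizers equals $\bigcup_{g \in G} H^g$. In other words, this union is precisely the set of elements of $G$ that have at least one fixed point in $G/H$.

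Next I would apply the definition of an intersecting set. Recall $S \subseteq G$ is intersecting for the action on $G/H$ if and only if for every $s_1, s_2 \in S$ there exists a point $xH \in G/H$ with $s_1 \cdot xH = s_2 \cdot xH$, equivalently $s_1^{-1}s_2 \in \Stab(xH)$. By the previous step, such an $x$ exists if and only if $s_1^{-1}s_2 \in \bigcup_{g \in G}H^g$. Quantifying over all pairs $s_1, s_2 \in S$ gives the claimed equivalence $S^{-1}S \subseteq \bigcup_{g \in G}H^g$.

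There is essentially no obstacle here; the only care needed is bookkeeping of the convention $H^g = g^{-1}Hg$ versus $gHg^{-1}$, so that the $x$ appearing in $\Stab(xH) = xHx^{-1}$ is correctly rewritten as $H^{x^{-1}}$ before taking the union. Because inversion is a bijection of $G$, this cosmetic issue does not affect the final union, and the lemma follows directly.
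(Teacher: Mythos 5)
Your proof is correct: identifying the stabilizer of $xH$ as $xHx^{-1}$ and noting that the union of all point stabilizers is $\bigcup_{g\in G}H^g$ is exactly the straightforward unwinding of definitions that the paper has in mind (it omits the proof, calling it straightforward). The care you take with the convention $H^g=g^{-1}Hg$ versus $gHg^{-1}$ is appropriate and handled correctly.
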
 

In particular, we have

\begin{proposition}\label{normal}
If $H$ is a normal subgroup of $G$, then the action of $G$ on $G/H$ has the strong EKR property. 
In particular, any Hamiltonian group has the strong EKR property.
\end{proposition}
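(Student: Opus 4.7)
The plan is to invoke Lemma \ref{inter-1} directly and exploit normality to collapse the union of conjugates to a single subgroup. Let $S \subseteq G$ be an intersecting set for the action of $G$ on $G/H$. Since $H$ is normal, we have $H^g = H$ for every $g \in G$, so Lemma \ref{inter-1} yields
\[
S^{-1} S \subseteq \bigcup_{g \in G} H^g = H.
\]
Fix any $s_0 \in S$. Then for every $s \in S$ we have $s_0^{-1} s \in H$, i.e.\ $s \in s_0 H$. Hence $S \subseteq s_0 H$, which immediately gives $|S| \le |H| = |G|/|G/H|$, the weak EKR inequality. Moreover, equality $|S| = |H|$ forces $S = s_0 H$, so any maximum intersecting set is a coset of $H$. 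Since $H$ is normal, it is its own unique conjugate, and hence the stabilizers of all points of $G/H$ equal $H$; so maximum intersecting sets are precisely the cosets of point stabilizers, proving the strong EKR property.

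For the second assertion, recall that a Hamiltonian group is by definition a non-abelian group in which every subgroup is normal. Any transitive action of such a $G$ is, up to isomorphism of $G$-sets, the translation action on $G/H$ for some subgroup $H \le G$. By hypothesis $H \triangleleft G$, so the first part of the proposition applies and the action has the strong EKR property. As this holds for every transitive action, $G$ itself has the strong EKR property.

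The argument is essentially forced once Lemma \ref{inter-1} is in hand; there is no real obstacle, since normality removes the only subtle feature of that lemma (the union ranging over conjugates of $H$). The only point worth double-checking is the bookkeeping in the equality case, i.e.\ that the cosets of the stabilizer of an arbitrary point of $G/H$ coincide with the cosets of $H$—but this is immediate from the fact that a normal subgroup is self-conjugate, so the stabilizer of every point is $H$ itself.
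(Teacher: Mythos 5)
Your argument is correct and is essentially the paper's own proof: both apply Lemma \ref{inter-1}, use normality to collapse $\bigcup_{g\in G}H^g$ to $H$, and conclude that $S^{-1}S\subseteq H$ forces $S$ to lie in (and, in the maximum case, equal) a coset of $H$, which is the common stabilizer of all points. Your extra details on the equality case and the Hamiltonian deduction are just the routine steps the paper leaves implicit.
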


\begin{proof}
Lemma \ref{inter-1} implies that if $S$ is an intersecting set for the action of $G$ on $G/H$, 
then $S^{-1}S \subseteq H$. This shows that a maximum intersecting set $S$ is a coset of the subgroup $H$.  
\end{proof}

The following reformulation of the notion of intersecting sets into the language of graph theory will
also be useful. To an action of a group $G$ on a set $X$, we associate a graph $ \Gamma_{G,X}$ as follows. The vertices of
$\Gamma_{G,X}$ are the elements of $G$. Vertices $g_{1}, g_{2} \in G$ are adjacent if and only if there exists $x \in X$ with $g_{1} \cdot x= g_{2}\cdot x$. Identifying $X$ with $G/H$, this condition is equivalent to $g_{1}^{-1}g_{2} \in 
\bigcup_{g \in G} H^g$. Note that if $g_1$ and $g_2$ are adjacent, then so are $gg_1$ and $gg_2$, for any $g \in G$. This implies that the automorphism group of the graph $\Gamma_{G,X}$ contains $G$ as a subgroup and hence $\Gamma_{G,X}$ is vertex transitive. Recall that a set of vertices of a graph is called a {\it clique} ({\it independent}, respectively), if the induced subgraph on this set is the complete
(empty, respectively) graph. The following well-known lemma will be useful in the sequel. 
For the sake of completeness, we will also provide a proof.

\begin{lemma}\label{cliq}
Let $ \Gamma$ be a vertex transitive graph, $ S \subseteq V( \Gamma)$ a clique, and $ T \subseteq V( \Gamma)$
an independent set. Then, $|S| \cdot |T| \le |V( \Gamma)|.$
\end{lemma}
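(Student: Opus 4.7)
The plan is to use the standard double-counting (or ``clique-coclique'') argument, exploiting vertex transitivity via the automorphism group $A=\mathrm{Aut}(\Gamma)$, which acts transitively on $V(\Gamma)$ by hypothesis.

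First I would observe the trivial but crucial fact that for any clique $S'$ and any independent set $T$, we have $|S'\cap T|\le 1$, since two vertices in the intersection would be simultaneously adjacent (as elements of $S'$) and non-adjacent (as elements of $T$). Next, for every $\sigma\in A$, the image $\sigma(S)$ is again a clique, so $|\sigma(S)\cap T|\le 1$.

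The key step is then to count the set $P=\{(\sigma,v):\sigma\in A,\ v\in\sigma(S)\cap T\}$ in two ways. Summing over $\sigma$ gives
\[
|P|=\sum_{\sigma\in A}|\sigma(S)\cap T|\le |A|.
\]
Summing in the other order, for each pair $(s,v)\in S\times T$ the number of $\sigma\in A$ with $\sigma(s)=v$ equals $|A|/|V(\Gamma)|$, by vertex transitivity (the stabilizer of any vertex has index $|V(\Gamma)|$ in $A$). Hence
\[
|P|=\sum_{v\in T}\sum_{s\in S}|\{\sigma\in A:\sigma(s)=v\}|=|S|\cdot|T|\cdot\frac{|A|}{|V(\Gamma)|}.
\]
Combining the two expressions yields $|S|\cdot|T|\le|V(\Gamma)|$, as desired.

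I do not anticipate any real obstacle here; the only subtlety is correctly invoking vertex transitivity of $\Gamma$ to conclude that each fiber $\{\sigma:\sigma(s)=v\}$ has size exactly $|A|/|V(\Gamma)|$, which is a direct consequence of the orbit--stabilizer theorem applied to the transitive action of $A$ on $V(\Gamma)$.
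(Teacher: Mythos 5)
Your proposal is correct and is essentially the same double-counting argument as the paper's proof: both count pairs involving automorphisms $\sigma$ mapping a vertex of $S$ into $T$, use orbit--stabilizer (vertex transitivity) to get fibers of size $|A|/|V(\Gamma)|$, and use the clique/independent-set observation to bound the count by $|A|$. No issues.
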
 
\begin{proof}
Let $G$ denote the automorphism group of $ \Gamma$, which, by assumption, acts transitively on $V(\Gamma)$. 
Consider the set $A= \{ (g,s): s \in S, g \in G, g \cdot s \in T \}.$
We have
\begin{equation}\label{double}
 |A|= \sum_{s \in S} \sum_{t \in T} |\{ g \in G: g \cdot s=t \}|
   = \sum_{g \in G} |\{ s \in S: g \cdot s \in T \}|. 
\end{equation}
Since the action is transitive, for any $s,t \in V( \Gamma)$, the set of elements $g \in G$ with 
$g \cdot s=t$ is precisely a coset of the stabilizer of $s$. This implies that 
each summand of the first sum is equal to $|G|/| V( \Gamma)|$ and hence 
\begin{equation}\label{fsum}
|A|= |S||T||G|/| V( \Gamma)|.
\end{equation}
On the other hand, since $S$ is a clique and $T$ an independent set, for every $g \in G$, there is at most
one $s \in S$ with $g \cdot s \in T$. This shows that every summand of the second sum in \eqref{double} is at most $1$ and hence $|A| \le |G|$. Combining this upper bound on $|A|$ with equation \eqref{fsum} establishes the desired inequality.
\end{proof}

Two subgroups $H$  and  $K$  of a group  $G$ are said to be {\it complementary} in $G$, 
if $H\cap K=\{\1\}$ and $KH=G$. The following corollary will be used in the proof of Theorem \ref{wekr}.

\begin{corollary}\label{comp} Let $H$ and $K$ be complementary subgroups of a group $G$.
Then the action of $G$ on $G/H$ has the weak EKR property.
\end{corollary}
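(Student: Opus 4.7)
The plan is to apply Lemma \ref{cliq} to the graph $\Gamma_{G,G/H}$ with $S$ playing the role of the clique (which it is, since intersecting sets are precisely cliques in this graph) and with $K$ playing the role of the independent set. Once that is set up, the conclusion $|S| \le |G|/|K| = |H|$ drops out immediately, because $|G| = |H||K|$ follows from $H \cap K = \{\mathbf{1}\}$ and $KH = G$, which gives exactly the weak EKR bound.

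So the whole content of the proof lies in showing that $K$ is an independent set of vertices in $\Gamma_{G,G/H}$. The natural way to see this is to observe that the restricted action of $K$ on $G/H$ is regular: it is transitive because $KH = G$ forces every coset $aH$ to have the form $kH$ for some $k \in K$, and it is free because $k \cdot H = H$ means $k \in H \cap K = \{\mathbf{1}\}$. Consequently for each $x \in G/H$ the stabilizer $\Stab_G(x)$, which is a conjugate of $H$, intersects $K$ only in $\mathbf{1}$.

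Now to see that $K$ is independent, suppose $k_1, k_2 \in K$ are distinct vertices that happen to be adjacent in $\Gamma_{G,G/H}$. By the definition of adjacency (translated via Lemma \ref{inter-1}) there is a point $x \in G/H$ with $k_1^{-1}k_2 \in \Stab_G(x)$. But $k_1^{-1}k_2 \in K\setminus\{\mathbf{1}\}$, contradicting the previous paragraph. Hence no two distinct elements of $K$ are adjacent, i.e.\ $K$ is independent, with $|K| = |G|/|H|$ elements.

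The main obstacle, such as it is, is really just this verification that $K$ injects disjointly into every point stabilizer, and it dissolves once one notices that $K$ acts regularly on $G/H$. With that in hand, Lemma \ref{cliq} yields $|S| \cdot |K| \le |G|$, and hence $|S| \le |H| = |G|/|G/H|$, which is the weak EKR inequality for the action of $G$ on $G/H$.
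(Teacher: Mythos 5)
Your proof is correct and follows essentially the same route as the paper: both arguments reduce the corollary to showing that $K$ is an independent set in $\Gamma_{G,G/H}$ and then invoke Lemma \ref{cliq} together with $|G|=|H||K|$. Your observation that $K$ acts regularly on $G/H$ is just a group-action rephrasing of the paper's computation $x^{-1}Kx\cap H=h^{-1}(K\cap H)h=\{\1\}$ for $x=kh$, so there is nothing substantively different to compare.
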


\begin{proof}
In order to appeal to Lemma \ref{cliq}, it suffices to show that $K$ is an independent set in the associated  
graph $\Gamma_{G,G/H}$. 
Let $\sigma\in K$ and $xH \in G/H$ be such that $\sigma \cdot xH=xH$. This implies that 
$x^{-1}\sigma x\in x^{-1}Kx\cap H$. Write $x=kh$, with $k\in K$ and $h\in H$. Then,
$$
x^{-1}Kx\cap H=h^{-1}Kh\cap H= h^{-1}(K \cap H) h= \{ \1 \}.
$$ 
Hence $ \sigma=\1$ and the action of $G$ on $G/H$ has the weak EKR property.
\end{proof}

\begin{remark}
The example of Heisenberg group presented at the end of Section \ref{EX} shows that the action in 
Corollary \ref{comp} does not always satisfy the strong EKR property. 
\end{remark}

The following lemma will be used many times in this paper. 
\begin{lemma}\label{inducedEKR}  Let $G$ be a finite group, then the following hold
\begin{enumerate}
\item If $G$ has the weak EKR property, then any subgroup of $G$ also has the weak EKR property. 
\item If the action of $G$ on a set $X$ has the weak EKR property and $N$ is a normal subgroup of $G$ that acts trivially on $X$, then the induced action of $G/N$ on $X$ also has the weak EKR property. 
\item Let $A \le H \le G$ be such that $A$ is a normal subgroup of $G$. If 
the action of $G/A$ on $G/H$ has the weak EKR property, then the action of $G$ on $G/H$ has the weak EKR property.
\end{enumerate}
\end{lemma}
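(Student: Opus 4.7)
The proof splits into three parts of increasing subtlety, which I would attack in the order stated. In each case I would use Lemma \ref{inter-1} to reformulate ``intersecting'' as the algebraic condition $S^{-1}S\subseteq\bigcup_{g}H^{g}$.

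For part (1), let $K\le G$ and let $L\le K$; the aim is to prove the weak EKR property for the $K$-action on $K/L$. The key observation is that $\bigcup_{k\in K}L^{k}\subseteq\bigcup_{g\in G}L^{g}$, so, viewing $L$ as a subgroup of $G$, any intersecting set $S\subseteq K$ for the $K$-action on $K/L$ is automatically intersecting for the $G$-action on $G/L$. The weak EKR hypothesis on $G$ then gives $|S|\le|G|/[G:L]=|L|$, which is the required bound.

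For part (2), identify $X$ with $G/H$. Since $N$ acts trivially on $X$, we have $N\subseteq\bigcap_{g}H^{g}$, so in particular $N\le H$ and the stabilizer of $H\in G/H$ under the induced $G/N$-action is $H/N$. Let $\pi\colon G\to G/N$ denote the quotient map. Given any intersecting set $\bar S\subseteq G/N$, its preimage $S=\pi^{-1}(\bar S)$ has cardinality $|N|\cdot|\bar S|$, and $S$ is intersecting for the $G$-action because the $G/N$-action on $X$ is, by construction, the one induced from $G$. Applying weak EKR of $G$ on $G/H$ yields $|S|\le|H|$, and hence $|\bar S|\le|H/N|$, as desired.

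Part (3) is the most delicate. Because $A$ is normal and contained in $H$, one has $A=A^{g}\subseteq H^{g}$ for every $g$, so $A$ fixes each coset $xH$. The $G$-action on $G/H$ therefore factors through $G/A$, producing an action of $G/A$ on $(G/A)/(H/A)\cong G/H$ with stabilizer $H/A$ of size $|H|/|A|$. Given $S\subseteq G$ intersecting for the $G$-action on $G/H$, I would pass to the saturation $SA$. Using $A\le H^{g}$ for every $g$, a one-line computation shows that whenever $s_{1}^{-1}s_{2}\in H^{g}$,
\[
(s_{1}a_{1})^{-1}(s_{2}a_{2})=a_{1}^{-1}(s_{1}^{-1}s_{2})a_{2}\in A\cdot H^{g}\cdot A\subseteq H^{g},
\]
so $SA$ is still intersecting. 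Pushing forward via $\pi$, the set $\pi(SA)\subseteq G/A$ is intersecting for the $G/A$-action on $(G/A)/(H/A)$, so the weak EKR hypothesis for $G/A$ yields $|\pi(SA)|\le|H|/|A|$. Since $SA$ is a disjoint union of $|\pi(SA)|$ cosets of $A$, we conclude $|S|\le|SA|=|A|\cdot|\pi(SA)|\le|H|$. The main technical point, and the only place where both the normality of $A$ in $G$ and the containment $A\le H$ are used simultaneously, is the verification that $SA$ inherits the intersecting property from $S$.
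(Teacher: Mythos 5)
Your proof is correct and follows essentially the same route as the paper: viewing $S$ as a subset of $G$ in part (1), pulling back along $\pi^{-1}$ in part (2), and pushing forward along $\pi$ with the coset count $|S|\le|A|\cdot|\pi(S)|\le|H|$ in part (3). Your saturation $SA$ is exactly $\pi^{-1}(\pi(S))$, so your explicit check that it is intersecting is just a spelled-out version of the paper's assertion that $\pi(S)$ is intersecting.
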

\begin{proof}
Let $H$ be a subgroup of $G$ and consider the action of $H$ on $H/K$ for a subgroup $K \leq H$. 
Let $S \subseteq H$ be an intersecting set for this action. Viewed
as a subset of $G$, $S$ is an intersecting set for the $G$-action on $G/K$. Since $G$ is assumed to
have the weak EKR property, we have $|S| \le |K|$.

For the second part, let $\pi: G \to G/N$  be the quotient map. Let $S$ be 
an intersecting set for the action of $G/N$ on $X$. Then $ \pi^{-1}(S)$ is also an intersecting set 
for the $G$-action on $X$. Hence, $|\pi^{-1}(S)| \le |G|/|X|$. Since $|\pi^{-1}(S)|=|S| \cdot |N|$,
we have $|S| \le |G/N|/|X|$, establishing the weak EKR property. 

For the third part, first observe that each stabilizer of the action of $G/A$ on $G/H$ has size $[H:A].$
Let $S \subseteq G$ be an intersecting set for the action of $G$ on $G/H$, and $\pi: G \to G/A$ be the 
quotient map. Clearly $\pi(S)$ is an intersecting set for the action of $G/A$ on $G/H$. 
By the assumption, we have $|\pi(S)|\leq |H|/|A|$.
Since $S\subseteq \pi^{-1}\left(\pi(S)\right)$, hence $|S|\leq |A| \cdot |\pi(S)|\le |H|$.
\end{proof}

The following lemma shows that the strong EKR property passes to the quotient groups. 

\begin{lemma}\label{s-red}
If $G$ has the weak (strong, respectively) EKR property, then any quotient of $G$ also has the weak
(strong, respectively) EKR property. 
\end{lemma}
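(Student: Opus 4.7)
My plan is to unpack what a transitive action of a quotient group $\bar G := G/N$ looks like, pull it back to a transitive action of $G$, and then transfer intersecting sets and their sizes across this correspondence.

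Let $N \trianglelefteq G$ and write $\pi: G \to \bar G$ for the quotient map. Every transitive action of $\bar G$ is equivalent to the action of $\bar G$ on $\bar G / \bar H$ for some subgroup $\bar H \leq \bar G$. By the correspondence theorem, $\bar H = H/N$ for a unique subgroup $H$ of $G$ containing $N$, and the map $gH \mapsto \pi(g)\bar H$ is a $G$-equivariant bijection $G/H \to \bar G/\bar H$ (where $G$ acts on the right-hand side through $\pi$). So verifying weak/strong EKR for the $\bar G$-action on $\bar G/\bar H$ is the same as controlling intersecting sets in $G$ for the $G$-action on $G/H$, but only those intersecting sets which are unions of $N$-cosets.

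For weak EKR, suppose $S \subseteq \bar G$ is intersecting for $\bar G$ on $\bar G/\bar H$. Via the equivariant bijection above, $\pi^{-1}(S) \subseteq G$ is intersecting for $G$ on $G/H$ (if $\pi(g_1)$ and $\pi(g_2)$ agree somewhere on $\bar G/\bar H$, then any lifts $g_1, g_2$ agree on the corresponding point of $G/H$). Weak EKR for $G$ gives $|\pi^{-1}(S)| \leq |H|$, and since $\pi^{-1}(S)$ is a union of $N$-cosets, $|\pi^{-1}(S)| = |N|\cdot|S|$; dividing by $|N|$ yields $|S| \leq |H|/|N| = |\bar H|$, which is exactly what weak EKR for $\bar G$ demands.

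For strong EKR, assume $|S| = |\bar H|$, so that $\pi^{-1}(S)$ is a maximum intersecting set in $G$ for $G$ on $G/H$. Strong EKR for $G$ forces $\pi^{-1}(S) = g H'$ for some $g \in G$ and some conjugate $H' = H^{g_0}$ of $H$. Since $N \trianglelefteq G$ and $N \leq H$, we have $N \leq H'$, so $gH'$ is already a union of $N$-cosets and $\pi(gH') = \pi(g)\,\pi(H')$. But $\pi(H') = \pi(H)^{\pi(g_0)} = \bar H^{\pi(g_0)}$, hence $S = \pi(g)\,\bar H^{\pi(g_0)}$ is a coset of a conjugate of $\bar H$, i.e.\ a coset of a point stabilizer for the action of $\bar G$ on $\bar G/\bar H$. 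Thus $\bar G$ has the strong EKR property. The only mild subtlety is the verification that lifting and projecting interchanges maximum intersecting sets with maximum intersecting sets and coset-of-a-conjugate with coset-of-a-conjugate; both follow cleanly from $N \leq H$ and the normality of $N$.
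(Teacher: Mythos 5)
Your proof is correct and takes essentially the same route as the paper: pull the intersecting set back via $\pi^{-1}$, observe that it is intersecting for the corresponding $G$-action, apply the weak (resp.\ strong) EKR property of $G$, and project back, using $N\le H$ so that a coset of a conjugate of $H=\pi^{-1}(\bar H)$ pushes down to a coset of a conjugate of $\bar H$. The only cosmetic difference is that you make the $G$-equivariant identification $G/H\to\bar G/\bar H$ explicit, whereas the paper works directly with the $G$-action on $\bar G/\bar H$ through $\pi$.
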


\begin{proof}
Let  $G'$ be a quotient of $G$ and $\pi: G \to G'$ be the canonical quotient map. Consider the action of $G'$ on $X'=G'/H'$.
Let $S' \subseteq G'$ be an intersecting set. It is clear that $ \pi^{-1}(S')$ is an intersecting
set for the $G$-action on $X'$. Hence, $|S'| \cdot [G:G']=|\pi^{-1}(S')|\le |G|/|X'|$. This implies that 
$|S'| \le |G'|/|X'|$. So $G'$ has the weak EKR property.
Now, assume that the group $G$ has the strong EKR property. If $S'$ is an intersecting set of size $|G'|/|X'|$, then $\pi^{-1}(S')$ will be an intersecting set
of size $|G|/|X'|$. This shows that $\pi^{-1}(S')= a \pi^{-1}(H'^{g'})$, for some $ a \in G$ and $g'\in G'$. From here, we have $S' = \pi(a)H'^{g'}$. 
\end{proof}

The following well-known lemma 
will be used in Section \ref{WK}. For the convenience of the reader, we will provide a proof. 

\begin{lemma}\label{sum2}
If $F$ is a finite field, then any element $a \in F$ can be expressed as the sum of two 
squares. 
\end{lemma}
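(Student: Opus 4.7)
The plan is to split into two cases based on the characteristic of $F$. Write $|F|=q$ and let $p$ be the characteristic. If $p=2$, then the Frobenius map $x \mapsto x^{2}$ is an injective (hence bijective) endomorphism of $F$, so every element of $F$ is itself a square; in particular $a = b^{2} + 0^{2}$ where $b$ is the unique square root of $a$. This case disposes of the characteristic-two situation immediately.

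For the case when $p$ is odd, the approach I would take is a standard pigeonhole argument on the set of squares. Let $S = \{x^{2} : x \in F\}$. The squaring map on $F^{\times}$ is a two-to-one homomorphism onto its image, so $|S \setminus \{0\}| = (q-1)/2$, and including $0$ we get $|S| = (q+1)/2$. Now consider the translate $a - S = \{a - s : s \in S\}$, which also has cardinality $(q+1)/2$. Since
\[
|S| + |a - S| = q + 1 > q = |F|,
\]
the two subsets cannot be disjoint, so there exist $s_{1}, s_{2} \in S$ with $s_{1} = a - s_{2}$, i.e.\ $a = s_{1} + s_{2}$ is a sum of two squares.

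There is no genuine obstacle here; the only thing to be slightly careful about is to include $0$ in the count so as to get strictly more than $q/2$ squares and make the pigeonhole inequality strict. The argument is entirely elementary and does not require anything beyond the fact that a non-trivial homomorphism from $F^{\times}$ to itself has kernel of size equal to the number of roots of the relevant polynomial.
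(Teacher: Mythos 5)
Your proof is correct and follows essentially the same route as the paper: the characteristic-two case via surjectivity of the Frobenius map, and the odd-characteristic case via the pigeonhole count $|S|+|a-S|=q+1>q$ with $|S|=(q+1)/2$. No changes are needed.
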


\begin{proof}
When the characteristic of $F$ is $2$, the map $s(x)=x^2$ is the Frobenius automorphism which 
is surjective. Assume that $|F|$ is an odd number. It is easy to see that 
the cardinality of the set of squares $S$
is $ \frac{q+1}{2}$. For $x \in F$, set, $x-S= \{ x-s: s\in S \}$. Since $ |S | + |x-S|= q+1>q$, we have
$S \cap (x-S) \neq \emptyset$. Assuming $s_1=x-s_2$, we have $x=s_1+s_2$, which establishes the claim. 
\end{proof}
\section{Rational Canonical forms and the Erd\H{o}s-Ko-Rado Properties}\label{EX}
In this section we will make use of various matrix canonical forms to demonstrate the absence of the weak and strong EKR property for some specific group actions. Although the result of this section will be eventually subsumed by Theorems \ref{classification_S} and \ref{Solvable} which will be proved in 
subsequent sections, the proofs given here contain some of the ideas behind the proof of the general theorems, while 
being at the same time more elementary. The following lemma will be used repeatedly throughout this section. 

\begin{lemma}\label{wekrl}
If a finite group $G$ has subgroups $U$ and $V$ such that $|V|>|U|$ and $ V \subseteq \bigcup_{g \in G } U^g$.
Then $G$ does not have the weak EKR property.
\end{lemma}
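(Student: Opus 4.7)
The plan is to exhibit the subgroup $V$ itself as an intersecting set witnessing the failure of the weak EKR property for the action of $G$ on $G/U$. Note that for this action, the stabilizer of the coset $U$ is $U$ itself, so the weak EKR bound would read $|S| \le |U|$ for every intersecting $S$.

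First, I would invoke Lemma \ref{inter-1}, which characterizes intersecting sets $S$ for the action of $G$ on $G/U$ by the condition
$$
S^{-1}S \subseteq \bigcup_{g \in G} U^g.
$$
Taking $S = V$, the fact that $V$ is a subgroup gives $V^{-1}V = V$, and the hypothesis $V \subseteq \bigcup_{g \in G} U^g$ then immediately yields that $V$ is an intersecting set. Since $|V| > |U|$ by assumption, this exceeds the weak EKR bound, so the action of $G$ on $G/U$ fails the weak EKR property, and hence $G$ itself does not have the weak EKR property.

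There is essentially no obstacle here; the statement is a direct packaging of Lemma \ref{inter-1} into a convenient form that can be reused throughout Section \ref{EX}. The only subtlety worth pointing out in the write-up is why the trivial equality $V^{-1}V = V$ is the right move: it converts the asymmetric-looking hypothesis (a union of conjugates covering $V$) into the exact condition demanded by the intersecting-set criterion. In applications, the lemma will be used by producing a pair $(U,V)$ of subgroups where $U$ is small but its union of conjugates already absorbs a strictly larger subgroup $V$, which is a typical situation, for instance, when $U$ is a non-normal subgroup whose conjugates sweep out a large union.
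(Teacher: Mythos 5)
Your proof is correct and follows exactly the paper's argument: apply Lemma \ref{inter-1} with $V^{-1}V=V$ to see that $V$ is an intersecting set for the action of $G$ on $G/U$, whose stabilizers have size $|U|<|V|$, so the weak EKR bound fails. Nothing further is needed.
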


\begin{proof}
The second assumption coupled with Lemma~\ref{inter-1} imply that $V$ is an intersecting set in the action of $G$ on 
$G/U$. The first assumption will show that the action does not have the weak EKR. 
\end{proof}

We will first use this lemma to show the absence of the weak EKR property for some non-solvable groups. 

\begin{theorem}
The following groups do not have the weak EKR property:
\begin{enumerate}
\item The group $\PGL_2(\f{q})$ for prime powers $q=p^l$, where $l \ge 2$.
\item The group $\PGL_n(\f{q})$ for $n \ge 3$ and prime power $q$.
\end{enumerate} 
\end{theorem}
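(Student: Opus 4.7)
The plan is to invoke Lemma \ref{wekrl} in each case: I exhibit subgroups $U \le V$ of $G$ with $|V| > |U|$ such that every element of $V$ is conjugate in $G$ to some element of $U$. The main tool is the following standard fact from the rational (Jordan) canonical form: in $\GL_n(\F{q})$ the non-identity unipotent matrices of Jordan type $(2,1^{n-2})$---equivalently, matrices $I + N$ with $N^2 = 0$ and $\mathrm{rank}(N) = 1$---form a single conjugacy class. This conjugacy descends to $\PGL_n(\F{q})$, since the projection $\GL_n \to \PGL_n$ is a surjective homomorphism.

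For part (1), let $G = \PGL_2(\F{q})$ with $q = p^l$, $l \ge 2$. I take $V$ to be the image in $G$ of the unipotent radical $\{I + tE_{12} : t \in \F{q}\}$ of the standard Borel, and $U$ the image of the subgroup $\{I + tE_{12} : t \in \F{p}\}$ coming from the subfield inclusion $\F{p} \subseteq \F{q}$. Both subgroups meet the centre of $\GL_2(\F{q})$ trivially, so $|V| = q = p^l$ and $|U| = p$, and the hypothesis $l \ge 2$ is exactly what yields the strict inequality $|V| > |U|$. All non-identity elements of $V$ and of $U$ are non-trivial unipotents of $\GL_2(\F{q})$, which form a single conjugacy class, so $V \subseteq \bigcup_{g \in G} U^g$ and Lemma \ref{wekrl} applies.

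For part (2), let $G = \PGL_n(\F{q})$ with $n \ge 3$. I take $V$ to be the image in $G$ of $\{I + aE_{12} + bE_{13} : a, b \in \F{q}\}$ and $U$ to be the image of the root subgroup $\{I + aE_{12} : a \in \F{q}\}$. The products $E_{1j}E_{1k}$ all vanish, so the matrices $aE_{12} + bE_{13}$ are closed under addition with square zero; this makes $V$ elementary abelian of order $q^2$ and $U$ a subgroup of $V$ of order $q$, both meeting the centre trivially. For $(a,b) \ne (0,0)$ the matrix $aE_{12} + bE_{13}$ has rank $1$, so every non-identity element of $V$ has Jordan type $(2,1^{n-2})$; the same is true in $U$, so $V \subseteq \bigcup_{g \in G} U^g$. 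Since $|V| = q^2 > q = |U|$, Lemma \ref{wekrl} again finishes the argument.

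The proof is quite direct once the right root configurations are identified; I do not foresee a serious obstacle. The only point worth flagging is why the hypotheses of the theorem appear: in (1) the strict inequality collapses without $l \ge 2$, while in (2) the construction really needs a third coordinate, so both restrictions are essential to this approach.
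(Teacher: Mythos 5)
Your proposal is correct and follows essentially the same route as the paper: the same subgroups $U$ and $V$ (the $\F{p}$-points versus $\F{q}$-points of the root subgroup in $\PGL_2$, and the root subgroup versus the two-parameter group $\{I+aE_{12}+bE_{13}\}$ in $\PGL_n$), with the canonical-form argument showing all non-identity elements of $V$ are conjugate into $U$, and Lemma~\ref{wekrl} concluding. The only cosmetic difference is that you treat general $n\ge 3$ directly where the paper writes out $n=3$ and calls the general case analogous.
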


\begin{proof}
The proofs are based on the rational canonical forms. 
For part (1), let $G=\PGL_2(\f{q})$ and consider the subgroups defined by
\[ U= \left\{ \begin{pmatrix}
1  & x    \\
 0 & 1 \\
\end{pmatrix}: x \in \FF_p \right\}, \qquad
 V= \left\{ \begin{pmatrix}
1  & x    \\
 0 & 1 \\
\end{pmatrix}: x \in \FF_q \right\}. \]
Let $g \in V \setminus \{ I_{2} \}$. Since $g$ and $u=\begin{pmatrix}
1  & 1    \\
 0 & 1 \\
\end{pmatrix} \in U$ have the same rational canonical form, they are conjugate. Since $u \in U$ and $ l \ge 2$, the pair $(U,V)$ satisfies the condition of Lemma \ref{wekrl}. 

We will write the proof of Part (2) for $n=3$. The proof for general $n$ is analogous. Consider the subgroups of $G= \PGL_3(\f{q})$ defined by
\begin{equation}\label{PGL_3}
U= \left\{ \begin{pmatrix}
1  & x  & 0  \\
 0 & 1  & 0\\
0 & 0 & 1
\end{pmatrix}: x \in \f{q} \right\}, \qquad
V= \left\{ \begin{pmatrix}
1  & x  & z  \\
 0 & 1  & 0\\
0 & 0 & 1
\end{pmatrix}: x,z \in \f{q} \right\}.
\end{equation}
Note that there is only one possible Jordan normal form for a unipotent matrix $g\in V \setminus \{ I_3 \}$, that is 
$$
\begin{pmatrix}
1 & 1 & 0\\
0 & 1 & 0\\
0 & 0 & 1
\end{pmatrix}\in U.
$$
This shows that $V$ is an intersecting set, with 
$|V|>|U|$, which again shows that $G$ does not have weak ERK.
\end{proof}

\begin{remark}\label{remark-PSL3}
A variation of this argument will be needed in the proof of Lemma \ref{EKR-simple}. Let $p=3$ and $n=3$. 
If $X,Y \in \GL_3(\F{3})$ and $X=P^{-1}YP$, replacing $P$ by $-P$, if necessary, we can always assume that $\det P=1$. This shows that with $U$ and $V$ as defined above, we have
$$V \subseteq \bigcup_{g \in \PSL_3(\F{3})}
U^g.$$ 
Hence $\PSL_3(\mathbb{F}_3)$ does not have the weak EKR property. 
\end{remark}

Let us now turn to the case of nilpotent groups and study the lack of the strong EKR property in a special case. Let $p>2$ be a prime number and consider the Heisenberg group defined by
\[ G_p= \left\{ \eta(x,y,z):=\begin{pmatrix}
1  & x  & z  \\
 0 & 1  & y\\
0 & 0 & 1
\end{pmatrix}: x,y, z \in \mathbb{F}_p \right\}. \]
It is obvious that $|G_p|=p^3$ and its center is given by 
$$Z:=Z(G_p)=\left\{\eta(0,0,z): z\in\mathbb{F}_p\right\}.$$
Hence $|G_p/Z(G_p)|=p^2$, implying that $G_p$ is nilpotent of class $2$, and therefore it has the weak EKR property by Theorem~\ref{wekr}. 
We will now construct a concrete action of $G_p$ with the weak but without the strong EKR property. 
Set
$$
H:=\left\{\eta(x,0,0): x\in\mathbb{F}_p\right\},\qquad L:=\left\{\eta(x,0,z): x,z\in\mathbb{F}_p\right\}.
$$ 
A simple computation shows that 
\begin{equation}\label{heisenberg}
\eta(x,0,0)^{\eta(a,b,c)}=\eta(x,0,bx).
\end{equation}
One can easily deduce from \eqref{heisenberg} that
\begin{equation}\label{L/Z}
\bigcup_{g\in G_p} H^g=(L\setminus Z) \cup \{ I_3 \}.
\end{equation}
Let $S=\{g_i:=\eta(a_i,b_i,c_i): 1\leq i\leq k\}$ be an intersecting set
for the action of $G_p$ on $G_p/H$. A simple computation shows that for $1 \le i, j \le k$, we have
\begin{equation}\label{eq1}
g_i^{-1}g_j=\eta(-a_i,-b_i,a_ib_i-c_i)\eta(a_j,b_j,c_j)=\eta(a_j-a_i,b_j-b_i,c_j-c_i+a_ib_i-a_ib_j)
\end{equation}
Since $S$ is intersecting, for $ 1 \le i \neq j \le k$, 
we have $g_i^{-1}g_{j}\in L\setminus Z$, hence $b_i=b_j$.
Equation~\eqref{eq1} now simplifies to
$$
g_i^{-1}g_j=\eta(a_j-a_i,0,c_j-c_i).
$$
Let us first show that this action has the weak EKR property, without applying Theorem~\ref{wekr}. Since there are exactly $p$ possible values for $a_i$, if $k>p$ the pigeonhole principle implies that there
exists $l\neq t$ for which $a_l=a_t$. This shows that $g_l^{-1}g_t=\eta(0,0,c_t-c_l)\in L\setminus Z$ which,
in turn, implies that $c_t=c_l$. This is a contradiction, since $g_t\neq g_l$.

On the other hand, this action does not have the strong EKR property. 
To show this, set
$$
S:=\left\{\eta(x,0,x^2): x\in\mathbb{F}_p\right\}. $$
By~\eqref{L/Z} and~\eqref{eq1}, $S$ is an intersecting set containing the identity matrix which is not a subgroup of $G_p$. Remark \ref{sgr} will imply that the action of $G_{p}$ on $G_{p}/H$ does not have the 
strong EKR property.

\section{The weak Erd\H{o}s-Ko-Rado Property for Finite Groups }\label{WK}
This section consists of two complementary parts. First we will give the proof of Theorem~\ref{Solvable}, showing that 
a non-solvable group cannot have the weak EKR property. Then, we will show that nilpotent groups and some solvable groups
have the weak EKR property.

\subsection{Absence of the Weak EKR Property for non-Solvable Groups }

 The main ingredient of the proof is the following 
lemma.

\begin{lemma}\label{EKR-simple}
The following groups do not have the weak EKR property:
\begin{enumerate}
\item $\PSL_2(\F{q})$, $q\geq 3$ a prime power,
\item $\PSL_3(\F{3})$,
\item The Suzuki group, $\Suz_{2^{p}}$, $p$ an odd prime,
\item The alternating group $A_n$ for $n \ge 5$. 
\end{enumerate}
\end{lemma}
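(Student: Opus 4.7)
My plan is to apply Lemma~\ref{wekrl} in each of the four cases by exhibiting subgroups $U, V$ of the given group $G$ with $|V|>|U|$ and $V\subseteq\bigcup_{g\in G} U^g$. The unifying theme for parts (1), (3), and (4) is that the ambient group has a single conjugacy class of involutions (or, in characteristic $2$, of nontrivial unipotent elements), so taking $V$ to be an elementary abelian $2$-subgroup of rank at least $2$ and $U=\langle u\rangle$ for some $u \in V\setminus\{\1\}$ already suffices.

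For part (1) with $q=2^l$, $l\geq 2$, I take $V$ to be the upper-triangular unipotent subgroup of $\PSL_2(\F{q})=\SL_2(\F{q})$; it is isomorphic to $(\F{q},+)$ and has order $q\geq 4$. In characteristic $2$, all nontrivial unipotents in $\SL_2(\F{q})$ are conjugate (because $(\F{q}^*)^2=\F{q}^*$), so any $U=\langle u\rangle$ with $u\in V\setminus\{\1\}$ works. When $q$ is odd, a rational canonical form argument applied to elements $g\in\SL_2(\F{q})$ with $g^2=-I$ shows that $\PSL_2(\F{q})$ has a unique class of involutions, so it suffices to locate a Klein four subgroup $V\leq \PSL_2(\F{q})$: for $q=3$ one has $\PSL_2(\F{3})\cong A_4$; for $q\geq 5$ odd, exactly one of $(q-1)/2$ or $(q+1)/2$ is even, so the normalizer of the appropriate maximal torus yields a dihedral subgroup of order divisible by $4$, which contains a copy of $V_4$.

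Part (2) is handled directly by Remark~\ref{remark-PSL3}: the same $U$ and $V$ used in the $\PGL_3(\F{3})$ construction already sit inside $\SL_3(\F{3})=\PSL_3(\F{3})$ (equality since $\gcd(3,3-1)=1$), and any $\GL_3(\F{3})$-conjugating matrix can be replaced by $-P$ to achieve determinant $1$. For part (3), let $P$ be a Sylow $2$-subgroup of $\Suz_{2^p}$; it is classical that $|P|=q^2$ and that $Z(P)$ is elementary abelian of order $q=2^p\geq 8$, consisting of involutions, and that $\Suz_{2^p}$ has a single class of involutions. Then $V=Z(P)$ and $U=\langle u\rangle$ for any $u\in V\setminus\{\1\}$ satisfy the hypothesis of Lemma~\ref{wekrl}. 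For part (4), in $A_n$ with $n\geq 5$ I would use the Klein four subgroup $V=\{\1,(12)(34),(13)(24),(14)(23)\}\subseteq A_4\leq A_n$; since the $S_n$-centralizer of $(12)(34)$ contains the odd permutation $(12)$, the $S_n$-class does not split upon restriction to $A_n$, so the three nontrivial elements of $V$ are all $A_n$-conjugate to $(12)(34)$, and $U=\langle(12)(34)\rangle$ again does the job.

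The main obstacle is not any single computation but the need to invoke several classical structural facts—in particular the single class of involutions in $\PSL_2(\F{q})$ for odd $q$ and in the Suzuki groups, and the precise structure of $Z(P)$ in a Sylow $2$-subgroup of $\Suz_{2^p}$. Once these are cited or verified, each part reduces to a direct application of Lemma~\ref{wekrl}.
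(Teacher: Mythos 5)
Your proposal is correct, but it takes a genuinely different route from the paper in parts (1), (3) and (4) (part (2) is identical, via Remark~\ref{remark-PSL3}). The paper's proof of (1) works inside $\SL_2(\F{q})$: it splits into three cases according to whether $-1$ is a square and whether $q$ is even, builds explicit intersecting sets of size $|H|+1$ by solving trace equations (using the two-squares Lemma~\ref{sum2}), and then descends to $\PSL_2(\F{q})$ via part (3) of Lemma~\ref{inducedEKR}; for (3) it computes directly in Jones' matrix model of $\Suz_q$, exhibiting $B\subseteq\bigcup_{h\in H}B_0^h$ with $|B|=q^2>2q=|B_0|$; for (4) it treats $n=5$ through the exceptional isomorphism with a projective linear group over $\F{4}$ and $n\ge 6$ with a Klein four group spread over six points. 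Your argument instead stays in the quotient groups and exploits the single conjugacy class of involutions: a Klein four subgroup $V$ against $U=\langle u\rangle$ of order $2$, which is uniform across $\PSL_2$ (odd $q$), $\Suz_{2^p}$, and $A_n$ for all $n\ge 5$ at once, and avoids both the case analysis and the descent lemma. What the paper's approach buys is self-containedness: your route imports classical structural facts (the unique class of involutions in $\PSL_2(\F{q})$ for odd $q$ and in $\Suz_{2^p}$, and the structure of the centre of a Sylow $2$-subgroup of $\Suz_{2^p}$), which are true but need citation or proof; note in particular that "same rational canonical form" only gives $\GL_2$-conjugacy of the trace-zero lifts, and upgrading this to conjugacy inside $\SL_2$ (hence $\PSL_2$) requires the determinant-adjustment via the centralizer--exactly the step the paper carries out in its Cases 1 and 2--so if you want your part (1) self-contained you should include that argument. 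Likewise the Suzuki facts you cite are themselves usually verified by computations of the kind the paper performs with the relation $k(\gamma)^{-1}v(\alpha,\beta)k(\gamma)=v(\alpha\gamma^{\theta},\beta\gamma^2\gamma^{\theta})$. Your treatment of (4) is arguably cleaner than the paper's, since the non-splitting criterion (an odd permutation centralizing $(12)(34)$) handles $n=5$ and $n\ge 6$ uniformly.
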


A non-abelian simple group $G$ is called a {\it minimal finite simple group}
if every proper subgroup of $G$ is solvable. Minimal finite
simple groups were classified by John Thompson 
in a series of groundbreaking papers which also laid the 
foundation for the subsequent study of finite simple groups. It turns out that
the proof of Lemma \ref{EKR-simple} does not require
the complete classification of finite simple groups. The following result
of Thompson would suffice.
\begin{theorem}[\cite{Thompson}, Corollary 1]\label{Thompson}
Every minimal finite simple group is isomorphic to one of the 
following finite simple groups:
\begin{enumerate}
\item $\PSL_{2}(\f{2^{p}})$, $p$ any prime.
\item $\PSL_{2}(\f{3^{p}})$, $p$ any odd prime. 
\item $\PSL_{2}(\f{p})$, $p>3$ any prime such that 
$p^{2}+1 \equiv 0 \pmod{5}$.
\item $\PSL_{3}(\f{3})$.
\item $\Suz_{2^{p}}$, $p$ any odd prime.  
\end{enumerate}
\end{theorem}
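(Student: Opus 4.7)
The plan is to follow Thompson's original strategy, which is a tour de force of local analysis, and which I would only attempt to sketch in broad strokes. First I would invoke the Feit-Thompson odd order theorem to conclude that any non-abelian simple group, in particular any minimal finite simple group $G$, has even order and therefore contains an involution $t$. The centralizer $C_G(t)$ is a proper subgroup of $G$, and by the minimality hypothesis it is solvable. Likewise, for any prime $p$ dividing $|G|$ and any non-identity $p$-subgroup $P$ of $G$, the normalizer $N_G(P)$ is proper and hence solvable. Thus $G$ is a group in which every local subgroup (normalizer of a non-trivial solvable subgroup) is solvable, putting $G$ squarely in the realm of Thompson's $N$-group analysis.

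The next step is to determine the possible isomorphism types of a Sylow $2$-subgroup $S$ of $G$. Using the solvability of all local subgroups, one applies Alperin's fusion theorem and $p$-local arguments to show that $S$ falls into a short list of possibilities: an elementary abelian $2$-group, a dihedral group, a Suzuki $2$-group, or a few exceptional configurations. Parallel analysis at odd primes, together with Glauberman's $ZJ$-theorem and Thompson's transitivity theorem, constrains the shape of odd-order Sylow subgroups and their fusion. Much of the work is carried out in the language of \emph{characteristic subgroups} of Sylow $p$-groups and the so-called \emph{weakly closed} subgroups, and the heart of the argument is to show that once the $2$-local structure is pinned down, the full multiplication table of $G$ is essentially forced.

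Having narrowed the $2$-local structure, the plan is then to identify $G$ in each case with one of the groups on Thompson's list by means of a recognition theorem. For the cases where $S$ is dihedral one would appeal to the Gorenstein-Walter theorem to recognize $\PSL_{2}(\f{q})$ for $q$ odd (together with $A_{7}$, which is then excluded because it contains the non-solvable subgroup $\PSL_{2}(\f{7})$). For elementary abelian Sylow $2$-subgroups one recovers $\PSL_{2}(\f{2^{p}})$ by the characterization of groups with abelian Sylow $2$-subgroups (Walter's theorem), and the primality of the exponent $p$ is forced because otherwise $\PSL_{2}(\f{2^{r}})$ for a proper divisor $r$ of the exponent would be a non-solvable proper subgroup. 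Similarly, a Suzuki-type Sylow $2$-subgroup leads via Suzuki's own characterization to $\Suz_{2^{p}}$, and the remaining small configurations lead to $\PSL_{3}(\f{3})$ and to $\PSL_{2}(\f{p})$ with $p^{2}+1\equiv 0\pmod 5$. In the latter family the number-theoretic congruence on $p$ arises from ensuring that $\PSL_{2}(\f{p})$ has no non-solvable proper subgroup of type $A_{5}$.

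The main obstacle, by an enormous margin, is the local analysis step: proving that the $2$-local and $p$-local structure must fall into one of a handful of explicit configurations. This is the substance of Thompson's $N$-group papers and occupies hundreds of pages. In a self-contained write-up one would not attempt to reproduce it; rather, one would cite Thompson's papers (and the recognition theorems of Gorenstein-Walter, Walter, Bender and Suzuki) as black boxes and concentrate on the bookkeeping step of verifying that each of the five listed families genuinely is minimal simple (i.e.\ that every proper subgroup is solvable), which for $\PSL_{2}$ follows from Dickson's classification of subgroups, for $\PSL_{3}(\f{3})$ from inspection of its maximal subgroups, and for $\Suz_{2^{p}}$ from Suzuki's description of the subgroup lattice together with the primality of $p$.
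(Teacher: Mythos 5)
The paper does not prove this theorem at all: it is quoted verbatim from Thompson's work (Corollary~1 of the cited paper), so the ``paper's own proof'' is simply the citation. Your proposal ultimately does the same thing --- defer the entire local analysis to Thompson's $N$-group papers and standard recognition theorems as black boxes --- and your surrounding sketch (Feit--Thompson to get an involution, solvability of all local subgroups, Sylow $2$-structure, the congruence $p^2+1\equiv 0 \pmod 5$ excluding an $A_5$ subgroup via Dickson's classification) is broadly faithful, modulo minor anachronisms such as invoking Gorenstein--Walter and Walter's theorems, which are not part of Thompson's self-contained argument; so this is essentially the same approach as the paper.
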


Note that Lemma \ref{EKR-simple} covers all the finite simple groups appearing in 
Theorem \ref{Thompson}.

\begin{proof}[Proof of Lemma \ref{EKR-simple}]
We first consider the case of $\PSL_2(\F{q})$. The proof goes as follows: we will construct a subgroup $H\leq \SL_2(\F{q})$ containing the center of $\SL_2(\F{q})$ such that the action of $\SL_2(\F{q})$ on $\SL_2(\F{q})/H$ doesn't have the weak EKR property. Then from part (3) of Lemma~\ref{inducedEKR} we conclude that $\PSL_2(\F{q})$ doesn't have the weak EKR property. The proof for $\SL_2(\F{q})$ breaks into three sub-cases:\\

\noindent{\bf Case 1}: {\it $-1$ is not an square in $\F{q}$.}

In this case, let $H$ be the $4$-element subgroup of $\SL_2(\F{q})$ generated by the matrix 
$$J:=\begin{pmatrix}
0 & -1\\
1 & 0
\end{pmatrix}.$$
We remark that if $-1$ is not a square in $\F{q}$, then any matrix $X \in \GL_2(\F{q})$ with trace zero and determinant $1$ has the minimal polynomial $x^2+1$ and hence it is conjugate to $J$, that is, $X= P^{-1}JP$, for $P \in \GL_2(\F{q})$. 
We claim that $P$ can be chosen so that $\det P=1$. To prove this, first observe that,
by Lemma~\ref{sum2}, the centralizer of $J$, which is given by
$$
C_{\GL_2({\F{q}})}(J)=\left\{\alpha_{a,b}:=\begin{pmatrix}
a & b\\
-b & a
\end{pmatrix}: a^2+b^2\neq 0\right\},$$
contains matrices with arbitrary non-zero value of determinant. So, after possibly replacing $P$ by $\alpha_{a,b}P$ 
for an appropriate choice of $a,b \in \F{q}$, we can assume that $\det P=1$.
 
We will now show that the action of $\SL_2(\F{q})$ on $\SL_2(\F{q})/H$ does not have the weak EKR property. This will be done by finding a matrix
$$
B:=\begin{pmatrix}
a & b\\
c & d
\end{pmatrix}\in \SL_2(\F{q}),
$$
such that $\Tr(B)=\Tr(JB)=0$. 
Finding such a matrix involves solving the following equations:
\begin{equation}\label{EQPSL}
a+d =0, \quad b-c =0, \quad ad-bc =1.
\end{equation}
The system of Equations~\eqref{EQPSL} has a solution if and only if the equation $c^2+d^2=-1$ has a solution, which, by Lemma \ref{sum2}, is always the case. Let $c_{0}, d_{0}$ be such that $c_0^2+d_0^2=-1$. Clearly, $c_0,d_0\neq 0$, since $-1$ is not a square in $\f{q}$. Now, set
$$
B=\begin{pmatrix}
-d_0 & c_0\\
c_0 & d_0
\end{pmatrix}.
$$
Clearly, $B \neq \pm J, \pm I_2$, and therefore $S=\{\pm I_2,\pm J,B\}\subseteq \SL_2(\F{q})$ is an intersecting set since 
$$
S^{-1}S\subseteq \bigcup_{g\in \SL_2(\F{q})} H^g.
$$
Notice that $S$ has more than four elements. So $\SL_2(\F{q})$ does not have the weak EKR property in this case. \\

\noindent{\bf Case 2.} {\it $-1$ is a square in $\F{q}$, and $\gcd(2,q)=1$.}

Let $T$ be the subgroup of $\SL_2(\F{q})$ consisting of diagonal matrices, $J$ as in Case 1, and $S= T \cup \{ J \}$. It is easy to see that $J$ normalizes $T$ and the subgroup generated by $S$ is the union of $T$ and matrices of the form
\begin{equation}\label{t2}
J_{ \alpha}= \begin{pmatrix}
0  &  \alpha    \\
  -\alpha^{-1} & 0 \\
\end{pmatrix}, 
\end{equation}
for $ \alpha \in \F{q}^*$. The characteristic polynomial of $J_{ \alpha}$ is  
$x^2+ 1= ( x+\sqrt{-1})(x- \sqrt{-1})$, implying that $J_{ \alpha}$ is diagonalizable. We may assume (using a similar argument to in Case 1) that the determinant of the diagonalizing matrix is $1$. 
 This implies that 
\[  
S^{-1}S \subseteq \bigcup_{g\in \SL_2(\F{q})}T^g,
\]
which shows that the action of $\SL_2(\F{q})$ on $\SL_2(\F{q})/T$ does not have the weak EKR property. 

\noindent{\bf Case 3.} {\it $q>2$ is power of two.}

The argument in this case is similar to Case 1. Let $H$ be the $2$-element subgroup of $\SL_2(\F{q})$ generated by $J$ defined in case 1.
If $B \in \SL_2(\F{q}) \setminus \{ I_{2} \}$ satisfies $\Tr B=0$, then both the minimal and characteristic polynomial of $B$ will be given by $x^2+1=(x+1)^2$. This shows that there exists a matrix $ P \in \GL_2(\F{q})$ such that 
\[ P^{-1}BP= J=\begin{pmatrix}
0 & 1\\
1 & 0
\end{pmatrix}. \]
By the above remark, we can assume that $P \in \SL_{2}(\F{q})$. 
It remains to find a matrix $B$ so that $B\in\SL_2(\F{q})$, $B\neq J,I$ such that 
$\Tr(B)=\Tr(JB)=0$. Similar to case 1, to find such a matrix we need to solve the following equation with condition $c,d\neq 0$.
$$
c^2+d^2=1.
$$
Since $q$ is a power of $2$, the map $ x \mapsto x^2$ is surjective. Since $q>2$, by choosing $c \neq 0,1$, one can 
guarantee existence of $B$ distinct from both $I$ and $J$. Hence $S= \{ I, J, B \}$ is an intersecting set
for the action of $G$ on $G/H$. 
This finishes the proof for the projective special linear groups. The case of $\PSL_{3}(\f{3})$ has been studied in Remark \ref{remark-PSL3}.

We will now turn to the Suzuki groups. There are various ways to introduce the Suzuki groups. As we will need
to carry out explicit computations, a matrix representation of the group fits best our
purpose. The realization of the group given below follows G. Jones' paper~\cite{Jones}. 

Set $q=2^{2n+1}$, where $n \ge 1$ is an integer and let $ \theta: \f{q} \to \f{q} $ be the 
automorphism defined by $ \theta(x)=x^{2^{n+1}}$. Note that $ \theta(\theta(x))=x^{2}$, i.e., $\theta$ is
a square root of the Frobenius automorphism. For $a, \alpha, b , \beta \in \f{q} $ and $\gamma,c \in \f{q} ^{\ast}$ set,
\[ u( \alpha, a, \beta, b)= \begin{pmatrix}
1  & 0  & 0 & 0 \\
  \alpha & 1  & 0 & 0\\
\alpha a+ \beta & a & 1 &   0 \\
 \alpha^2a + \alpha \beta+ b & \beta & \alpha & 1
\end{pmatrix}, \qquad
h( \gamma, c)= \begin{pmatrix}
 \gamma c  & 0  & 0 & 0 \\
 0 &  \gamma  & 0 & 0\\
0 & 0 &  \gamma^{-1} &   0 \\
0 & 0 & 0 &  \gamma^{-1} c^{-1}
\end{pmatrix}. \]
Let $\tau$ denote the $4 \times 4$ matrix with $ \tau_{14}= \tau_{23}= \tau_{32}= \tau_{41}=1$, and all the other entries zero. For brevity, we will use the shorthands $ v( \alpha, \beta)= u( \alpha, \alpha^{\theta}, \beta, \beta^{\theta})$ and $ k( \gamma)= h( \gamma, \gamma^{\theta})$, for $ \gamma \neq 0$.
The Suzuki group is then defined by $\Suz_{q}= S \cup T$, where $S$ and $T$ are given by  
\[ S= \left\{v( \alpha, \beta)k( \gamma):
\alpha, \beta, \gamma \in \f{q} , \gamma \neq 0 \right\} \]
\[ T= \left\{ v( \alpha, \beta) \kappa( \gamma) 
\tau v( \delta, \epsilon):
\alpha, \beta, \gamma , \delta, \epsilon \in \f{q} , \gamma \neq 0 \right\}. \]
One can easily verify that
\begin{equation}\label{Suz1}
v( \alpha_{1}, \beta_{1}) v( \alpha_{2}, \beta_{2})= v( \alpha_{1}+ \alpha_{2}, \beta_{1}+ \beta_{2} + 
\alpha_{1}\alpha_{2}^{\theta}).
\end{equation}  
and 
\begin{equation} \label{suz-pro}
 k( \gamma)^{-1} v( \alpha, \beta) k( \gamma)= v( \alpha \gamma^{\theta}, \beta \gamma^{2} \gamma^{\theta}). 
\end{equation}
From \eqref{Suz1} and \eqref{suz-pro}, one can see that $S$ is a subgroup of $\Suz_{q}$ and the subsets $B$
and $H$ of $S$ defined by
\[ B= \left\{ v( \alpha, \beta):
\alpha, \beta \in \f{q}  \right\},  \quad
H= \left\{  k( \gamma): \gamma \in \f{q} , \gamma \neq 0 \right\}, \]
are both subgroups of $S$. Equation \eqref{suz-pro} also shows that $H$ normalizes $B$. 
Consider the subgroup $B_{0}\le B$ defined by
\[ B_{0}= \left\{ v( \alpha, \beta):
\alpha \in \f{2}, \beta \in \f{q} \right\}. \]
We claim that 
\[ B= \bigcup_{h \in H } B_{0}^{h}. \]
In order to prove this, we claim for a given pair $ ( \alpha, \beta) \in \f{q}  \times \f{q} $,  there exists 
$ \gamma \in \f{q}  \setminus \{ 0 \}$ such that $ \alpha \gamma^{\theta} \in \f{2}$. This is obvious for $ \alpha=0$.
For $ \alpha \neq 0$, the element $ \gamma \in \f{q}  \setminus \{ 0 \}$ must satisfy the equation 
$ \gamma^{\theta}= \alpha^{-1}$, which has the unique solution $\gamma= (\alpha^{\theta^{-1}})^{-1}.$
This claim shows that $B$ is an interesting set for the action of $G$ on $G/B_{0}$, and 
since $|B|>|B_{0}|$, by Lemma \ref{wekrl}, we see that $\Suz_{q}$ does not have the weak EKR property.

Finally, for the alternating groups, if $n=5$ the argument follows from the isomorphism $ A_5 \simeq \PGL_2(\f{4})$.
For $n\ge 6$, set $U= \{ \1, (12)(34) \}$ and $V= \{ \1, (12)(34), (12)(56), (34)(56) \}$, 
and use Lemma \ref{wekrl}. Therefore we have checked all cases in Lemma~\ref{EKR-simple}. This finishes the proof of Lemma~\ref{EKR-simple}. 
\end{proof}
We can now prove Theorem~\ref{Solvable}. 

\begin{proof}[Proof of Theorem~\ref{Solvable}]
Let $G$ be a minimal counter-example, that is, $G$ is a non-solvable group with the smallest size with the weak EKR property. 
Define $G^{0}=G$ and for $i \ge 0$, set $G^{i+1}=[G^{i}, G^{i}]$. Lemma~\ref{inducedEKR} 
shows that $G^{1}$ has the weak EKR property. If $G^{1}$ is solvable, then $G$ will be solvable too. Hence, by the minimality assumption, we have $G=[G,G]$, i.e., $G$ is perfect. Let $N$ a maximal proper normal subgroup of $G$. Then by Lemma~\ref{s-red}, $G/N$ has the weak EKR property. 
On the other hand, since $G$ is perfect, $G/N$ cannot be abelian. This shows that $G/N$ is a (non-abelian) finite simple
group. Once again, the minimality assumption shows that $N= \{ \1\}$ and hence $G$ is a non-abelian finite simple group. On the other hand, every proper subgroup of $G$ has the weak EKR property, hence 
it is solvable. This shows that $G$ is a minimal finite simple group with the weak EKR property which is contradictory to Lemma \ref{EKR-simple}.
\end{proof}
\subsection{Weak EKR for Nilpotent and Solvable Groups}
In this subsection we will prove Theorem~\ref{wekr}. We say that two subgroups $H$  and  $K$ of a group  $G$ are
complementary in $G$, if $H\cap K=\{\1\}$ and $KH=G$. A finite group  $G$ is said  to  be {\it complemented} if 
for every subgroup  $H \le G$ there exists a subgroup $K \le G$ such that $H$ and $K$ are complementary in $G$. 
\begin{theorem}[Hall~\cite{hall}]\label{Hall}
A  group of finite order is complemented if  and only if it is
isomorphic with a subgroup of a direct product of groups of square-free order.
\end{theorem}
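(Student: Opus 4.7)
The plan is to prove both implications. For the easier direction ($\Leftarrow$), suppose $G$ embeds in $\Pi = G_1 \times \cdots \times G_n$ with each $|G_i|$ square-free. First I would establish that the class of complemented groups is closed under subgroups: if $G$ is complemented, $K \le H \le G$, and $K^c$ is a complement of $K$ in $G$, then by Dedekind's modular law $H = K(H \cap K^c)$ and $K \cap (H \cap K^c) \subseteq K \cap K^c = \{\1\}$, so $H \cap K^c$ complements $K$ in $H$. It therefore suffices to prove that $\Pi$ itself is complemented. Each $G_i$ has square-free order, hence is solvable (by iterating Burnside's $p^aq^b$ theorem), and every subgroup of $G_i$ is automatically a Hall subgroup since its order is coprime to its index; Hall's $D$-theorem for solvable groups then supplies a complement, so each $G_i$ is complemented. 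I would finish this direction by induction on the number of factors, showing that a direct product of complemented groups of square-free order remains complemented.

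For the converse ($\Rightarrow$), assume $G$ is complemented. I would first extract structural information: (a) every element of $G$ has square-free order, since for $g$ of order $p^k$ with $k\ge 2$ any complement of $\langle g^p\rangle$ would have to contain $\langle g\rangle$ and intersect $\langle g^p\rangle$ trivially, a contradiction; (b) each Sylow $p$-subgroup $P$ is elementary abelian, because $P$ is itself complemented and a complement $K$ of $Z(P)$ in $P$ gives $P = Z(P) \times K$ (as $Z(P)$ is central), and induction on $|P|$ forces $K$, and hence $P$, to be abelian, and hence elementary abelian by (a); and (c) $G$ is solvable, either by Theorem~\ref{Solvable} (since Corollary~\ref{comp}, applied to every subgroup $H$ and its complement $K$, shows that every translation action of $G$ has the weak EKR property) or by a direct chief-series argument. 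Walking down a chief series $\1 = G_0 < \cdots < G_m = G$ whose factors are elementary abelian of prime order, for each prime $p \mid |G|$ I would assemble a homomorphism $\pi_p\colon G \to H_p$ with $H_p$ of square-free order, recording the action of $G$ on its $p$-chief factors together with their complements, and verify that the product map $G \to \prod_p H_p$ is injective, producing the required embedding.

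The main obstacle lies in the forward direction: proving that the direct product of complemented groups of square-free order is complemented. A subgroup $H \le G_1 \times G_2$ need not be of the form $H_1 \times H_2$, so factor-wise complements cannot simply be combined. The example of $A_4$, whose Sylow subgroups are elementary abelian yet whose order-$2$ subgroups lack complements, shows that being a subgroup of a direct product of square-free order groups is strictly stronger than merely having elementary abelian Sylow subgroups; resolving the inductive step requires a careful argument using the normal subgroups arising from the factors together with Schur--Zassenhaus when the relevant orders are coprime.
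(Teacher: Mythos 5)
The paper does not prove this statement at all; it is imported verbatim from Hall's 1937 paper \cite{hall}, so your attempt has to be measured against Hall's own argument, and against that standard both directions of your outline stop short of the actual content. In the ($\Leftarrow$) direction, your Dedekind-identity argument correctly shows that complementedness passes to subgroups, and each square-free factor is indeed complemented via Hall's theorems on solvable groups (though ``iterating Burnside's $p^aq^b$ theorem'' does not prove solvability when $|G_i|$ has three or more prime divisors; one needs, e.g., the classical fact that a group with all Sylow subgroups cyclic is metacyclic, hence solvable). But the step this direction actually turns on --- that a \emph{direct product} of groups of square-free order is complemented --- is precisely what you defer as ``the main obstacle,'' and the tools you name will not close it: Schur--Zassenhaus requires a normal Hall subgroup, while a typical subgroup $H \le G_1 \times G_2$ is a fibered (subdirect) subgroup that is neither normal nor Hall in the product and does not decompose along the factors. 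Hall's route is different: he first characterizes complemented groups intrinsically, as the supersolvable groups whose Sylow subgroups are all elementary abelian --- a class visibly closed under subgroups and finite direct products and containing every group of square-free order --- and then manufactures complements by induction on $|G|$, cutting along a minimal normal subgroup, which has prime order by supersolvability. Without this characterization, or an equivalent device, your backward direction is a restatement of the theorem rather than a proof.

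The ($\Rightarrow$) direction has a matching gap. You walk down a chief series ``whose factors are elementary abelian of prime order,'' i.e., you assume $G$ is supersolvable; but your facts (a)--(c) yield only solvability plus elementary abelian Sylow subgroups, and your own example $A_4$ shows this weaker package does not force prime-order chief factors ($A_4$ has the chief factor of order $4$). So ``complemented $\Rightarrow$ supersolvable'' is a genuine intermediate theorem you must prove, not a bookkeeping convention. Moreover the homomorphisms $\pi_p$ are never constructed, and the injectivity of $G \to \prod_p H_p$ --- the entire substance of the embedding --- is left as an unexecuted verification. (For comparison, Hall's induction takes a minimal normal subgroup $N$ of prime order $p$ with complement $K$, embeds $G$ into $G/N \times G/\mathrm{core}_G(K)$ since $N \cap \mathrm{core}_G(K) = \{\1\}$, and settles the core-free case by observing that $G$ then embeds in the affine group $C_p \rtimes C_{p-1}$, whence its elementary abelian Sylow subgroups are cyclic and $|G|$ is square-free.) Two smaller repairs: in (a), a complement of $\langle g^p \rangle$ in $G$ need not contain $\langle g \rangle$ as you assert; argue instead that complementedness is subgroup-closed and that in $\langle g \rangle \cong C_{p^k}$, $k \ge 2$, the subgroup lattice is a chain, so $\langle g^p \rangle$ meets every nontrivial subgroup and has no complement. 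Finally, your route to solvability through Corollary~\ref{comp} and Theorem~\ref{Solvable} is legitimate within this paper (Theorem~\ref{Solvable} does not depend on Theorem~\ref{Hall}, so there is no circularity), but it invokes Thompson's classification of minimal finite simple groups for a fact Hall obtained by elementary means.
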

 
\begin{proof}[Proof of Theorem \ref{wekr}]
Theorem~\ref{Hall} and Corollary~\ref{comp} establish the second part of the claim.

For the first part, let us assume that $G$ is nilpotent and $H$ is a subgroup of $G$. We will prove by induction on $|G|$ that the action of $G$ on $G/H$ has the weak EKR property. 
Let $A= H \cap Z(G)$. Note that $A$ is a normal subgroup of $G$ which is included in $H$. 
If $|A|>1$, then by induction hypothesis the action of $G/A$ on $G/H$ has the weak EKR property, 
and part (3) of Lemma \ref{inducedEKR} implies that the action of $G$ on $G/H$ also has the weak EKR
property. We can hence assume that $H \cap Z(G)= \{ \1 \}$. Let $\pi: G \to G/Z(G)$ be the quotient map.
By the induction hypothesis, the action of $\pi(G)$ on $\pi(G)/\pi(H)$ has the weak EKR property. 
Let $S \subseteq G$ be an intersecting set for the action of $G$ on $G/H$. 
Since $\pi(S)^{-1} \pi(S) \subseteq \bigcup_{g \in \pi(G)} \pi(H)^{g}$, the image
$\pi(S)$ is an intersecting set for the action of $\pi(G)$ on $\pi(G)/\pi(H)$. Moreover,
$S^{-1}S \cap Z(G) \subseteq \bigcup_{g \in G} (H\cap Z(G))^g = \{ \1 \}$.
Hence, the restriction of $\pi$ to 
$S$ is injective and $|\pi(S)|=|S|$. Now, from the assumption that the action of $\pi(G)$ on $\pi(G)/\pi(H)$ 
has the weak EKR property, it follows that $|\pi(S)| \le |\pi(H)|=[H:H \cap Z(G)]=|H|$. This finishes the proof. 
\end{proof}
\section{Nilpotent Groups with the strong EKR Property}\label{ST}
In this section, we will give a proof for Theorem \ref{classification_S}.  
The following lemma is standard and follows from an inductive argument:

\begin{lemma}\label{two-step}
Let $G$ be a two-step nilpotent group and $x, y \in G$ with $ [x,y]=z$. For
$m, n \in \mathbb{Z}$, we have
\[ [x^m, y^n]=z^{mn}. \]
\end{lemma}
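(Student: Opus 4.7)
The plan is to exploit the hypothesis that $G$ is two-step nilpotent, which means $[G,G] \subseteq Z(G)$; in particular, the commutator $z=[x,y]$ lies in the center of $G$ and commutes with both $x$ and $y$. This centrality is precisely what makes the standard commutator identities collapse into additive-looking formulas.

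First, I would recall the commutator identities
\[
[ab,c] = [a,c]^{b}[b,c], \qquad [a,bc]=[a,c]\,[a,b]^{c},
\]
which are valid in any group. Because every commutator of elements of $G$ lies in $Z(G)$, the conjugation superscripts disappear, so these identities simplify to $[ab,c]=[a,c][b,c]$ and $[a,bc]=[a,b][a,c]$. In other words, the map $(a,b)\mapsto[a,b]$ is bimultiplicative when restricted to arguments whose commutators are central — which is the case here.

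Next, I would prove the claim in stages. First, by induction on $m\ge 1$, using $[x^{m},y]=[x^{m-1},y][x,y]=z^{m-1}\cdot z = z^{m}$, I would establish $[x^{m},y]=z^{m}$ for positive $m$. To extend to negative $m$, I would use $\1 = [\1,y] = [x\cdot x^{-1},y] = [x,y][x^{-1},y]$, so $[x^{-1},y]=z^{-1}$, and then induct again. Once $[x^{m},y]=z^{m}$ is known for all $m\in\mathbb{Z}$, setting $w=x^{m}$ and observing that $[w,y]=z^{m}$ is central, I would run the same induction in the second coordinate to conclude $[x^{m},y^{n}]=[w,y^{n}]=(z^{m})^{n}=z^{mn}$ for all $n\in\mathbb{Z}$.

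There is no real obstacle here; the statement is essentially the assertion that in a two-step nilpotent group the commutator bracket is $\mathbb{Z}$-bilinear modulo higher commutators, and the only mild care needed is the handling of negative exponents and the verification that the centrality of $z$ really does eliminate the conjugation twists in the standard commutator identities.
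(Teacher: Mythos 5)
Your proof is correct, and it is exactly the standard inductive argument the paper invokes (the paper gives no details, stating only that the lemma ``follows from an inductive argument''): the centrality of $[G,G]$ makes the commutator bimultiplicative, and induction in each variable, together with the $[x\cdot x^{-1},y]=\1$ trick for negative exponents, yields $[x^m,y^n]=z^{mn}$. Nothing further is needed.
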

We will also need the following lemma about modification of large cosets.
The lemma states the unsurprising fact that the cosets of subgroup of large index are
rigid, in the sense that a small modification in a coset never yields a coset of a conjugate of the same subgroup. 
\begin{lemma}\label{mod-int}
Let $x$ be an element of prime order $p>2$ in a finite group $G$, and 
$H$ the subgroup generated by $x$. For $t \in G \setminus H$, the set $S= (H \setminus \{\1  \})
\cup \{ t \}$ is not a coset of a conjugate of $H$. 
\end{lemma}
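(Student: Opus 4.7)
My plan is to prove the lemma by a double cardinality count on $|S \cap H|$. Suppose for contradiction that $S = aK$ where $K = H^g$ for some $g \in G$, so $K$ is cyclic of prime order $p$ (being a conjugate of $H$). By construction, $S \cap H = H \setminus \{\mathbf{1}\}$, so
\[
|S \cap H| = p - 1 \geq 2,
\]
where we crucially use the hypothesis $p > 2$. The goal is to show that no coset of any conjugate of $H$ can have this exact intersection size with $H$.

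The key observation is the standard fact that $|aK \cap H| \in \{0, |K \cap H|\}$: indeed, if this intersection is nonempty, pick $h_0 \in aK \cap H$ and write $h_0 = ak$, which gives $aK = h_0 K$, and then the map $x \mapsto h_0^{-1} x$ is a bijection from $h_0 K \cap H$ onto $K \cap H$. Now I split into two cases based on the intersection $K \cap H$, which is a subgroup of both $K$ and $H$, each of prime order $p$:
\begin{itemize}
\item If $K \neq H$, then $K \cap H = \{\mathbf{1}\}$, so $|aK \cap H| \in \{0, 1\}$, contradicting $|S \cap H| = p - 1 \geq 2$.
\item If $K = H$, then $aK = aH$ is a coset of $H$, which is either $H$ itself (when $a \in H$) or disjoint from $H$. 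In the first case $aH = H$ forces $t \in H$, contradicting $t \notin H$; in the second case $|S \cap H| = 0$, again contradicting $|S \cap H| = p - 1$.
\end{itemize}
Either way we reach a contradiction, finishing the proof.

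I don't anticipate a significant obstacle here: the argument is a short counting exercise that exploits the primality of $|H|$ (to force $K \cap H \in \{\{\mathbf{1}\}, K\}$) together with the hypothesis $p > 2$ (to make $p - 1 \geq 2$, so that one cannot have $|aK \cap H| \leq 1$ and still meet the required size). The only minor care is noting the bijection between $aK \cap H$ and $K \cap H$ when the intersection is nonempty, which is the standard coset-intersection principle.
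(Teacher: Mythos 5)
Your proof is correct. It is close in spirit to the paper's argument but the mechanism differs: you identify $S\cap H = H\setminus\{\1\}$ of size $p-1\ge 2$ and invoke the standard dichotomy $|aK\cap H|\in\{0,|K\cap H|\}$, so that primality of $|K|$ forces $K=H$, after which $t\notin H$ (or $\1\notin S$) gives the contradiction. The paper instead works with a right coset $S=H^g w$ and multiplies pairs of elements of $S$, noting $x^{i-j}=x^i x^{-j}\in (H^g w)(w^{-1}H^g)\subseteq H^g$ for $i\ne j$, which (using $p>2$) yields $|H^g\cap H|\ge p-1$ and hence $H^g=H$; the endgame is then the same as yours. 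Your route avoids the product computation and makes the roles of primality and of $p>2$ more transparent, while the paper's manipulates $S$ directly without appealing to the coset-intersection principle. One small point worth recording if you write this up: you phrase the hypothesis as $S=aK$ (a left coset) while the paper uses a right coset $H^g w$; since the lemma quantifies over all conjugates of $H$ and $Kw = w\,(w^{-1}Kw)$, the two formulations are interchangeable, so your proof does cover the statement as intended.
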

\begin{proof}
If this is not the case, there exists $g, w \in G$ such that 
$S= H^gw$. Note that for each $ 1 \le i \neq j \le p-1$, we have
\[ x^{i-j}= x^i x^{-j} \in (H^gw)(w^{-1}H^g) \subseteq H^g.  \]
Since $p>2$, the difference $i-j$ takes all the non-zero residues module $p$. 
Hence 
\[ |H^g \cap H| \ge p-1. \]
This shows that $H=H^g$, hence $S= Hw$. Let $h_0\in H\cap S$. Since $S=Hw$, we have $h_0=h_1w$, for some $h_1\in H$. Therefore $w\in H$, which implies that $S=H$, which is a contradiction since $\1\not\in S$.   
\end{proof}
We can now state and prove the main theorem. 
\begin{proof}[ Proof of Theorem~\ref{classification_S}]
Let $G$ be a minimal counter-example to Theorem \ref{classification_S}. 
Since $G$ is nilpotent we can write $G= \prod_p S_p$, where $S_p$ is the unique 
$p$-sylow subgroup of $G$. By minimality of $G$ and Lemma \ref{s-red}, we have $G=S_{p}$ for some $p>2$, i.e., $G$ is a non-abelian $p$-group for some $p>2$. Using Lemma \ref{s-red}, we will make a series of further reductions. 
As $|G/Z(G)|<|G|$, the quotient $G/Z(G)$ is abelian, or equivalently, $[G,G] \subseteq Z(G)$. We claim that $[G,G]$ has exactly $p$ elements. Assume by way of contradiction that $|[G,G]|>p$ and let $A\unlhd [G,G]
\subseteq Z(G)$ be 
a non-trivial proper subgroup. Then $G/A$ is a non-abelian $p$-group with the strong EKR property which contradicts the minimality of $G$. Our next claim is that $G\setminus Z(G)$ contains an element of order $p$. To prove this claim, we use a theorem (see~\cite{Rob} Theorem 5.3.6), stating that when $p$ is an odd prime, then any non-abelian $p$-group has at least two subgroups of order $p$. Now, if all elements of order $p$ are included in the center, then we can choose a subgroup $L$ of order $p$ with $[G,G] \nsubseteq L\subseteq Z(G)$.
Then $G/L$ is a non-abelian $p$-group with the strong EKR property, which by minimality of $G$ is again a contradiction. Choose an element $x$ of order $p$ in $G \setminus Z(G)$ and for each $1 \le j \le p-1$, set
\[ C_j= \{ [x^j,y]: y \in G \}. \]
Since $x \not \in Z(G)$, we have $ C_1 \neq \{ \1 \}$. Choose $t \in C_1 \setminus 
\{ \1 \}$ and let $t=[x,y]$. Clearly, $t$ is a central element of order $p$. Since $[G,G] \subseteq Z(G)$, by Lemma \ref{two-step}, for each $ 1\le i \le p-1$, we have
\[ C_i \supset \{ [x^i, y^j]: 0 \le j \le p-1 \} = \{t^{ij}: 0 \le j \le p-1 \}
= [G,G]. \]
This shows that for each $ 1 \le i \le p-1$, there exists $y_i$ such that 
$[x^i, y_i]=t$, or equivalently,
\[ y_ix^{-i}y_i^{-1}=tx^{-i}. \]
This proves that the set $\{ x^j: 1 \le j \le p-1 \} \cup \{ t^{-1} \}$ 
is an intersecting set for the action of $G$ on $G/H$, where $H$ is the subgroup generated by $x$, and by Lemma \ref{mod-int} is not a coset of a conjugate
of the subgroup generated by $x$. This contradiction proves the theorem.
\end{proof}
\section{Group Actions with the Weak and Strong EKR}\label{ACT}
Let $q$ be a power of a prime $p$. In this section we consider sevaral actions of 
the groups $\PGL_n(\F{q})$ and $\PSL_{n}(\F{q})$ and prove the weak and strong EKR property for them.

\subsection{Action on the Projective Space}
In this subsection, we will consider the standard actions of $\PGL_n(\F{q})$ and $\PSL_{n}(\F{q})$ on the projective space $\P{n-1}$ and give a proof of Theorem \ref{mainPSL}.

First, we will briefly recall some elementary facts that will be needed for the proof.  
Recall that $\F{q^n}$ is a $\F{q}$-vector space of dimension $n$. Every non-zero $x\in\F{q^n}$ defines 
a non-singular $\F{q}$-linear transformation $\varphi_x: \F{q^n} \rightarrow \F{q^n}$ via $\varphi_x(a) = ax$
and the map $x \mapsto \varphi_{x}$ defines a group homomorphism $
\Phi: \F{q^n}^* \rightarrow  \PGL_n(\F{q})$ by $\Phi(x) = \varphi_x Z$, where $Z$ denotes the centre of $\GL_n(\F{q})$ consisting of scaler matrices. The kernel of this map is easily seen to be $\F{q}^*$.
We will use the following lemma which can be found in \cite{neukirch} (see Definition 2.5 and Proposition 2.6). 

\begin{lemma}\label{}
Let $E/F$ be a Galois extension of degree $n$ with the Galois group $G$. Then for any $x\in E$ we have
$$
\det(tI-\varphi_x)=\prod_{\sigma\in G}(t -\sigma(x)).
$$ 
Therefore $\{\sigma(x): \sigma\in G\}$ is the multi-set of eigenvalues of the multiplication map $\varphi_x$.
\end{lemma}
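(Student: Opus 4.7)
The plan is to compare both sides of the identity as monic polynomials in $F[t]$ of degree $n=[E:F]$, reducing to the case where $x$ is a primitive element via a tower-of-fields argument, and then invoking the standard fact that the characteristic polynomial of multiplication by a primitive element equals its minimal polynomial.

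First I would observe that both sides lie in $F[t]$ and are monic of degree $n$: the left-hand side by definition, and the right-hand side because its coefficients are elementary symmetric functions of the $G$-orbit of $x$, hence $G$-invariant. Next, I would set $F' = F(x)$, $d = [F':F]$, $m = [E:F']$, and pick an $F$-basis $f_1,\ldots,f_d$ of $F'$ together with an $F'$-basis $e_1,\ldots,e_m$ of $E$. Ordering the product basis $\{f_i e_j\}$ so that the vectors sharing a fixed $e_j$ appear consecutively yields a matrix for $\varphi_x$ that is block-diagonal with $m$ identical $d\times d$ blocks, each being the matrix of multiplication by $x$ on $F'$ viewed as an $F$-vector space. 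Consequently
$$\det(tI_n - \varphi_x) = p(t)^m,$$
where $p(t)$ denotes the characteristic polynomial of multiplication by $x$ on $F'$.

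The remaining step is to identify $p(t)$ with $\prod_\tau(t-\tau(x))$, where $\tau$ ranges over the $d$ distinct $F$-embeddings $F'\hookrightarrow E$. The polynomial $p(t)$ is monic of degree $d$ in $F[t]$, and it annihilates $x$ (apply the Cayley--Hamilton identity to the vector $1\in F'$ and use $\varphi_x^k(1)=x^k$), so it must coincide with the minimal polynomial of $x$ over $F$, whose roots in $E$ are precisely the $\tau(x)$. Since $E/F'$ is Galois of degree $m$, each such $\tau$ extends to exactly $m$ elements of $G$, all agreeing on $x$; grouping the product $\prod_{\sigma\in G}(t-\sigma(x))$ according to the restriction $\sigma|_{F'}$ then gives $\prod_\tau(t-\tau(x))^m = p(t)^m$, matching the left-hand side. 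The second assertion about eigenvalues follows immediately from the factorization. The main obstacle I anticipate is the bookkeeping in the tower decomposition — arranging the product basis so that the matrix is genuinely block-diagonal, and verifying that each embedding of $F'$ extends to precisely $m$ Galois automorphisms — but once these are set up the argument is purely formal.
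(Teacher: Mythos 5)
Your proof is correct; the paper gives no argument of its own for this lemma (it simply cites Neukirch, Prop.\ 2.6), and your proof---block-diagonalizing $\varphi_x$ over the tower $F \subseteq F(x) \subseteq E$ to obtain $\det(tI-\varphi_x)=p(t)^m$ with $p$ the minimal polynomial of $x$, then grouping the factors of $\prod_{\sigma\in G}(t-\sigma(x))$ by the restriction $\sigma|_{F(x)}$---is precisely that standard argument. The points you flag as bookkeeping are indeed routine: separability and normality of $E/F$ give both that $p(t)=\prod_\tau(t-\tau(x))$ over the $d$ distinct $F$-embeddings $\tau\colon F(x)\to E$ and that each such $\tau$ extends to exactly $m=[E:F(x)]$ elements of $G$.
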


The Galois group of $\F{q^n}$ over $\F{q}$ is generated by the Frobenius automorphism $\sigma_q$
defined by $\sigma_{q}(x):= x^{q}$. This shows that the set of all eigenvalues of $\varphi_x$ for $ x \in \F{q^n}^*$ is given by 
$$
\left\{\sigma_q^\ell(x): 0\leq \ell\leq {n-1}\right\}=\left\{x^{q^\ell}: 0\leq \ell\leq {n-1} \right\}.
$$ 
 We are now ready to give a proof of Theorem~\ref{mainPSL}.
\begin{proof}[Proof of Theorem~\ref{mainPSL}]
Let $\zeta\in\F{q^n}^*$ be a generator of the cyclic group $\F{q^n}^*$ and set $m:=q^{n-1}+\dots+q+1$. We will show that the set
$$
T:=\left\{\Phi(\zeta^j): 0\leq j\leq m-1 \right\},
$$
is an independent set in the graph $\Gamma_{\PGL_n(\F{q}),\P{n-1}}$ defined in Section~\ref{BASE}. A projective transformation 
has a fixed point on the projective space if and only if the corresponding linear transformation has an eigenvalue in $\F{q}$. So, it
suffices to show that for $0\leq i\neq j\leq m-1$, none of the eigenvalues of $\varphi_{\zeta^{j-i}}$ lies in $\F{q}$. Suppose by way of contradiction that $\zeta^{(j-i)q^{\ell}}\in \F{q}$ for some $0\leq i\neq j\leq m-1$, and $0\leq \ell\leq {n-1}$. 
This implies that 
$
\zeta^{(j-i)q^\ell(q-1)}=1$ and hence $q^n-1\mid (j-i)q^\ell(q-1)$. This easily reduces to $m|j-i$,
which is a contradiction, since $1\leq |j-i|\leq m-1$. Moreover, for any $1\leq j\leq m-1$, 
we have $\zeta^j\not\in \F{q}$, and therefore $|T|=m=|\P{n-1}|$. 
Now, Lemma~\ref{cliq}, shows that an intersecting set in $\PGL_n(\F{q})$ has the size at most 
$[\PGL_n(\F{q}): T]=|\PGL_n(\F{q})|/|\P{n-1}|$.

Let us now consider the group $\PSL_n(\F{q})$. First, recall that since $\gcd(n,q-1)=1$, the natural surjection $\SL_n(\F{q}) \to \PSL_n(\F{q})$
is indeed an isomorphism. It is easy to see that the order of $\zeta^{q-1}$ is
equal to $m$. Define
$$
\mu_{m}:=\{\zeta^{(q-1)j}: 0\leq j\leq m-1\}.
$$
Since the product of the eigenvalues of $\varphi_{\zeta^{(q-1)j}}$ is $\zeta^{j(q-1)m}=1$,
we obtain a group homomorphism $\Phi_1: \mu_{m} \rightarrow \PSL_n(\F{q}).$
Set $T_{m}:=\Phi_1(\mu_{m})$. We claim that $T_m$ is an independent set in the graph $\Gamma_{\PSL_n(\F{q}),\P{n-1}}$. To show this, note that the eigenvalues of $\varphi_{\zeta^{(q-1)j}}$, $0\leq j\leq m-1$, are given by 
$$\lambda_\ell:=\zeta^{j(q-1)q^\ell}, \qquad 0\leq \ell\leq n-1.$$

If $  \lambda_\ell \in \F{q}$, then $ \lambda_\ell^{q-1}=1$, which implies that $q^n-1\mid j(q-1)^2q^\ell$.
Since $\gcd(n,q-1)=1$, and $m \equiv n \pmod{q-1}$, we have $\gcd(m, q-1)=1$. This together with $\gcd(q^n-1,q^\ell)=1$ gives $m|j$, which is a contradiction. This shows that $T_m$ is an independent set of size $|T_m|=m$
in the graph $\Gamma_{\PSL_n(\F{q}),\P{n-1}}$. Now Lemma~\ref{cliq} establishes the result. 
\end{proof}  
\subsection{Action on $G/U$}
In this subsection we will prove Theorem~\ref{Unipotent}. Recall that a matrix $g \in \GL_{2}(\f{q})$ is 
unipotent if and only if $(g-I_2)^{2}=0$. We will denote the set of unipotent elements by ${\mathcal U}$.
For simplicity, we will refer to conjugates of $U$ as unipotent subgroups. It is clear that any element of a unipotent subgroup is a unipotent element. One can also show that any unipotent element is contained in a unipotent subgroup, but we do not need this fact here. The 
following lemma is the key to the proof of Theorem~\ref{Unipotent}.

\begin{lemma}\label{uni}
Let $U_{1}$ and $U_{2}$ be distinct unipotent subgroups of $G=\GL_2(\f{q})$. Then
\begin{enumerate}
\item $U_{1}\cap U_{2}= \{ I_{2} \}$.
\item If $g_{i} \in U_{i}$, 
$i=1,2$ be such that $g_{1}g_{2}$ is a unipotent matrix. Then $g_{1}=I_{2}$ or $g_{2}=I_{2}$.
\end{enumerate}
\end{lemma}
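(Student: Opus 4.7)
The plan is to split the two parts and prove each with an eigenvalue/trace argument that treats $\GL_2$ and $\SL_2$ uniformly.

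For part (1), I would first note that a nontrivial unipotent element $g$ of $\GL_2(\f{q})$ has characteristic polynomial $(t-1)^2$ but is not the identity, so $g - I_2$ has rank exactly $1$; equivalently, $g$ fixes a unique line $L_g \subset \f{q}^2$. Writing $U_1 = h_1 U h_1^{-1}$, every nontrivial element of $U_1$ fixes the common line $L_1 := h_1 \cdot \langle e_1 \rangle$; similarly for $U_2$. Since unipotent subgroups are in bijection with these lines (equivalently, with points of $\mathbb{P}^1(\f{q})$) via $h U h^{-1} \mapsto h \cdot \langle e_1 \rangle$, the assumption $U_1 \neq U_2$ gives $L_1 \neq L_2$. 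Any common nontrivial element would then fix two distinct lines and hence be diagonalizable with eigenvalue $1$, forcing it to equal $I_2$, a contradiction.

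For part (2), the key is a trace computation. Parametrize $U_i = \{I_2 + x N_i : x \in \f{q}\}$, where $N_i$ is a nonzero nilpotent rank-$1$ matrix; concretely, if $U_i$ is the stabilizer of $L_i = \langle u_i \rangle$, then $N_i = u_i v_i^{\top}$ for some $v_i$ with $v_i^{\top} u_i = 0$. Write $g_i = I_2 + x_i N_i$. Then
\[
g_1 g_2 = I_2 + x_1 N_1 + x_2 N_2 + x_1 x_2 N_1 N_2,
\]
and since $\Tr N_i = 0$, we get $\Tr(g_1 g_2) = 2 + x_1 x_2 \Tr(N_1 N_2)$. A matrix in $\GL_2(\f{q})$ with $\det = 1$ is unipotent if and only if its trace equals $2$; since $\det(g_1 g_2) = 1$ automatically, the unipotence of $g_1 g_2$ forces $x_1 x_2 \Tr(N_1 N_2) = 0$. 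A direct computation with the rank-$1$ factorizations gives $\Tr(N_1 N_2) = (v_1^{\top} u_2)(v_2^{\top} u_1)$. Because $U_1 \neq U_2$, the lines $L_1, L_2$ are distinct, so $u_1$ and $u_2$ are linearly independent; then $v_1^{\top}$, whose kernel is $\langle u_1 \rangle$, does not vanish on $u_2$, and symmetrically $v_2^{\top} u_1 \neq 0$. Hence $\Tr(N_1 N_2) \neq 0$, forcing $x_1 = 0$ or $x_2 = 0$, i.e., $g_1 = I_2$ or $g_2 = I_2$.

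The only mildly delicate step is confirming that all conjugates of $U$ arise from rank-$1$ nilpotents factored as $u v^{\top}$ with $v^{\top} u = 0$, so that the trace identity $\Tr(uv^{\top} u' v'^{\top}) = (v^{\top} u')(v'^{\top} u)$ applies; this is a straightforward rank-$1$ fact and is the main routine bookkeeping. Everything else is immediate from $\Tr N_i = 0$ and the characterization of unipotence by trace in $\GL_2(\f{q})$, so the argument applies uniformly to both $G = \GL_2(\f{q})$ and $G = \SL_2(\f{q})$.
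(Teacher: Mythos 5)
Your proof is correct, and its engine is the same one the paper uses: for $2\times 2$ matrices of determinant $1$, unipotence is equivalent to trace $2$ (this is valid in characteristic $2$ as well, by Cayley--Hamilton), and unipotence of $g_1g_2$ forces the cross term in the trace to vanish. The differences are in the packaging. For part (1) the paper characterizes a unipotent subgroup $V$ containing some $g\neq I_2$ as $C_G(g)\cap\mathcal{U}$ via an explicit centralizer computation, while you use the unique pointwise-fixed line of a nontrivial unipotent element together with the correspondence between unipotent subgroups and lines of $\f{q}^2$. For part (2) the paper conjugates so that $U_1=U$, computes with explicit matrices to get $\Tr(g_1g_2)=2+az$, and then uses $\det g_2=1$ (and, implicitly, part (1) to pass from $g_2\in U$ to $g_2=I_2$), whereas you keep both subgroups in the invariant form $\{I_2+xN_i: x\in\f{q}\}$ with $N_i=u_iv_i^{\top}$ rank-one nilpotent and show $\Tr(N_1N_2)=(v_1^{\top}u_2)(v_2^{\top}u_1)\neq 0$ directly from $L_1\neq L_2$; this avoids the WLOG reduction and makes the role of the distinctness hypothesis transparent. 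The one step you should spell out is the injectivity of $U_i\mapsto L_i$: if $h_1\langle e_1\rangle=h_2\langle e_1\rangle$, then $h_2^{-1}h_1$ lies in the upper-triangular Borel subgroup, which normalizes $U$, so $h_1Uh_1^{-1}=h_2Uh_2^{-1}$; equivalently, $hUh^{-1}$ is exactly the set of unipotent matrices fixing the line $h\langle e_1\rangle$ pointwise. With that line added, your argument is complete and applies verbatim to conjugates of $U$ inside $\SL_2(\f{q})$ as well.
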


\begin{proof}
For part (1), we first claim that if $V$ is a unipotent subgroup and $g \in V \setminus \{ I_2 \}$, then 
\[ V= C_{G}(g) \cap {\mathcal U}. \]
In other words, $V$ consists of those unipotent elements of $G$ that commute with $g$. It suffices to prove
this statement for $V=U$. For $x \neq 0$, expanding the equation
\[ \begin{pmatrix}
1  & x    \\
 0 & 1 \\
\end{pmatrix} \begin{pmatrix}
a  & b    \\
 c & d \\
\end{pmatrix} = \begin{pmatrix}
a  & b    \\
 c & d \\
\end{pmatrix} \begin{pmatrix}
1  & x    \\
 0 & 1 \\
\end{pmatrix} \]
results in $c=0$ and $a=d$. Hence if $hg=gh$, then 
\[ h= \begin{pmatrix}
a  & b    \\
 0 & a \\
\end{pmatrix}. \]
If $h$ is unipotent, then $a=1$, and hence $h \in U$. This proves that $C_{G}(g) \cap {\mathcal U} \subseteq U$.
The reverse inclusion is obvious. (1) is immediate from the this claim.

For part (2), without loss of generality, we can assume that $U_{1}=U$, hence $g_{1}$ is an upper-triangular matrix, which 
we assume is not the identity matrix. 
Then we can write
\[g_1g_2=\begin{pmatrix}
1  & a    \\
 0 & 1 \\
\end{pmatrix} \begin{pmatrix}
x & y    \\
z & 2-x \\
\end{pmatrix} = \begin{pmatrix}
x+az  & y+a(2-x)    \\
 z & 2-x \\
\end{pmatrix} \]
and hence from $\Tr(g_{1}g_{2})=2+az=2$, and $g_{1} \neq I_2$ we deduce that $z=0$. This together
with the fact that $\det g_{2}=x(2-x)=1$ implies that $x=1$, hence $g_{2}=I_2$.

\end{proof}

\begin{proof}[Proof of Theorem~\ref{Unipotent}]
We will show that if  $S \subseteq G$ is an intersecting subset containing $I_{2}$, then $S$ is contained in a unipotent subgroup. Take distinct $g_1, g_2 \in S \setminus \{ I_{2} \}$. Since $S$ is intersecting containing the identity element, all
elements $g_1^{-1}, g_1^{-1}g_2$ and $g_2$ are contained in unipotent subgroups, and hence Lemma \ref{uni} shows that $g_1$ and $g_2$ are in the same unipotent subgroup. This shows that $S \subseteq U$ for a unipotent subgroup $U$, proving the result. 
\end{proof}
\section{Acklowledgement}
Part of this research was carried out during the second author's visit at the University of Ottawa. The second author wishes to thank Vadim Kaimanovich for his warm hospitality. The first author was supported by postdoctoral fellowship from the University of Ottawa during the completion
of this work.  The first author wishes to thank his supervisor Vadim Kaimanovich for the financial support and the pleasant working environment. Authors would like to especially thanks the referees for several detailed comments that lead 
to improving the exposition of the paper and correcting some inaccuracies.

\bibliographystyle{plain}

\end{document}